\documentclass[11pt,a4paper]{amsart}
\usepackage[english]{babel}
\usepackage{cite}
\usepackage{amssymb,enumerate,latexsym,hyperref}
\setcounter{tocdepth}{1}
\usepackage{color}
\usepackage[pagewise]{lineno}

\newcommand{\assign}{:=}
\newcommand{\infixand}{\text{ and }}
\newcommand{\nobracket}{}

\newenvironment{proof*}[1]{\noindent\textbf{#1\ }}{\hspace*{\fill}$\Box$\medskip}
\newtheorem{theorem}{Theorem}[section]
\newtheorem{lemma}[theorem]{Lemma}
\newtheorem{proposition}[theorem]{Proposition}
\newtheorem{corollary}[theorem]{Corollary}
\theoremstyle{remark}
\newtheorem{remark}[theorem]{Remark}


\begin{document}
    
\title[Global product structure]{On the global product structure for uniformly quasiconformal Anosov diffeomorphisms}

\author{jiesong zhang}
\address{School of Mathematical Sciences, Peking University, Beijing, 100871,
China}
\email{zhjs@stu.pku.edu.cn}

\begin{abstract}
 We prove that a transitive uniformly quasiconformal Anosov diffeomorphism with a two-dimensional stable or unstable distribution has a global product structure. As an application, we remove a topological assumption in a global rigidity theorem for uniformly quasiconformal Anosov diffeomorphisms of Kalinin-Sadovskaya {\cite{ks03}}. 
\end{abstract}

{\maketitle}

{\tableofcontents}

\section{Introduction}
A diffeomorphism $f : M \rightarrow M$ on a compact Riemannian manifold $M$ is called \textit{Anosov} if the tangent bundle $TM$ admits a $Df$-invariant splitting $E^s \oplus E^u$ and there exists a positive integer $k$ such that for any $x\in M$, 
\[
\|Df^k|_{E^s(x)}\|<1<\|(Df^k|_{E^u(x)})^{-1}\|^{-1}.
\]
An open conjecture that dates back to Anosov and Smale says that all Anosov diffeomorphisms are topologically conjugate to automorphisms of infra-nilmanifolds. In classification results toward this end \cite{bri77,bri80,bm80,fra68,ham14,ks03,man74,new70}, a key step is establishing the global product structure.

Two foliations $\mathcal{F}_1$ and $\mathcal{F}_2$ have \textit{global product structure} if every leaf $\mathcal{F}_1(x)$ intersects every leaf $\mathcal{F}_2(y)$ at a unique point $[x,y]$. An Anosov diffeomorphism has global product structure if the stable and unstable foliations have global product structure on the universal covering. 

All known examples of Anosov diffeomorphisms have global product structure, but it is widely open that whether all Anosov diffeomorphisms have global product structure. Brin showed that Anosov diffeomorphisms with a ``pinched" spectrum have global product structure \cite{bri77}. To be precise, Recall that every diffeomorphism $f$ of a compact Riemannian manifold $M$ induces a bounded linear operator $f_\ast$ in the space of continuous vector fields defined by
\[
(f_\ast v)(x) = Df_{f^{-1}(x)}(v(f^{-1}(x))).
\]
According to Mather \cite{mat69}, Anosov diffeomorphisms are characterized by the condition that the spectrum $\sigma$ of the complexification of $f_\ast$ does not intersect the unit circle, thus is contained in the interiors of two annuli with radii $0 < \lambda_1 < \lambda_2 < 1$ and $1 < \mu_2 < \mu_1 < \infty$. An Anosov diffeomorphism has a pinched spectrum if 
\[
1 + \frac{\log \mu_2}{\log \mu_1} > \frac{\log \lambda_1}{\log \lambda_2} \quad\text{or}\quad 1 + \frac{\log \lambda_2}{\log \lambda_1} > \frac{\log \mu_1}{\log \mu_2}.
\]
The ``pinching condition" considered by Brin is a uniform condition, which allows him to obtain a uniform bound on the distance of disks in the unstable(stable) leaves after iteration and to establish the global product structure. 

It is natural to consider pointwise conditions as they are more general, i.e. we only restrict the behavior of $D f|_{T M_x}$ at each single point $x \in M$. For example, we consider the uniformly quasiconformal systems. An Anosov diffeomorphism $f$ is called \textit{uniformly $u$-quasiconformal} if $\text{dim} E^u \geq 2$ and the quasiconformal distortion
\[ K^u (n, x) := \frac{\max \{ \| D f^n (v) \| : v \in E^u (x), \| v \| = 1 \}}{\min \{ \| D f^n (v) \| : v \in E^u (x), \| v \| = 1 \}} \]
is uniformly bounded for all $n \in \mathbb{Z}$ and $x \in M$. The notions of uniform $s$-quasiconformality are defined similarly. If $f$ is both uniformly $u$-quasiconformal and uniformly $s$-quasiconformal, then it is simply called \textit{uniformly quasiconformal}. 

The quasiconformal condition is a geometric condition that arises from the study of geodesic flows on hyperbolic manifolds \cite{kan93, sul81, yue96}. Generally speaking, the quasiconformal condition provides a dynamically invariant geometric structure (conformal structure) and often implies rigidity results. For example, Fang \cite{fan04, fan07, fan14}, Kalinin \cite{ks03}, and Sadovskaya \cite{ks03, sad05} established remarkable rigidity results for uniformly quasiconformal Anosov diffeomorphisms and Anosov flows. Butler and Xu \cite{bx18} observed the rigidity phenomenon in uniformly quasiconformal partially hyperbolic systems. Quasiconformal methods also naturally appear in the study of holomorphic Anosov systems \cite{ghy95} and holomorphic partially hyperbolic systems \cite{xz24}. For uniformly quasiconformal Anosov diffeomorphisms, we have:

\begin{theorem}[Kalinin-Sadovskaya {\cite{ks03}}]
\label{t2}
Let $f$ be a transitive $C^{\infty}$ uniformly quasiconformal Anosov diffeomorphism of a compact manifold $M$. If $f$ has global product structure, then $f$ is $C^{\infty}$-conjugate to an affine Anosov automorphism of a finite factor of a torus.
\end{theorem}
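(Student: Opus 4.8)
The plan is to promote the quasiconformal hypothesis to a smooth invariant affine structure on each of the two foliations, and then to use the global product structure to glue the resulting leafwise linearizations into a single smooth conjugacy on the universal cover. The backbone is therefore: invariant conformal structure $\Rightarrow$ leafwise smooth linearization $\Rightarrow$ affine holonomies $\Rightarrow$ global affine model.

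First I would produce continuous $Df$-invariant conformal structures on $E^u$ and $E^s$. The uniform bound on $K^u(n,x)$ says precisely that the orbit of the standard conformal structure under the derivative cocycle is bounded, uniformly in $x$ and $n$, inside the space of conformal structures on $E^u$, a nonpositively curved symmetric space modeled on $SL(d,\mathbb{R})/SO(d)$ with $d=\dim E^u$. Taking the circumcenter of the closure of this bounded orbit produces a $Df$-invariant section $\tau^u$, which is continuous because the defining set depends continuously on $x$ by the uniformity of the bound; the same construction on $E^s$ gives $\tau^s$. Transitivity guarantees these are the canonical invariant structures, and their existence forces $Df$ to act by $\tau^u$-similarities, so the Lyapunov spectrum on $E^u$ reduces to a single exponent, and likewise on $E^s$.

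Next I would upgrade regularity and linearize. Along unstable leaves the structure $\tau^u$ is as smooth as $f$, being invariant under the smooth expanding dynamics $f|_{W^u}$; one controls it transversally through the stable holonomy, and Journ\'e's lemma then yields joint smoothness. The single-exponent (``narrow band'') spectrum is exactly the hypothesis of the nonstationary Sternberg linearization, so I obtain smooth coordinates $\phi^u_x : W^u(x) \to E^u(x)$ with $\phi^u_{f(x)} \circ f = Df|_{E^u(x)} \circ \phi^u_x$, intertwining each leaf with its conformal linear part, and symmetrically $\phi^s_x$. Conformality of $\tau^u$ makes the stable holonomies conformal maps between unstable leaves; by Liouville's theorem (for $\dim E^u \geq 3$) they are Möbius, and in the linearizing coordinates they must be affine, so the leaves are flat and the foliations are affine. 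The two-dimensional distribution must be handled separately, exploiting the extra rigidity that uniform quasiconformality supplies in that dimension.

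Finally I would assemble the global conjugacy. On the universal cover the global product structure identifies $\widetilde{M}$ with $\widetilde{W}^s(x) \times \widetilde{W}^u(x)$, and by affineness of the holonomies the leafwise coordinates $\phi^s,\phi^u$ patch into a global diffeomorphism $\Phi : \widetilde{M} \to E^s \oplus E^u = \mathbb{R}^n$ conjugating the lift $\widetilde f$ to an affine map $L$ whose linear part is a conformal hyperbolic automorphism. The deck group $\pi_1(M)$ then acts on $\mathbb{R}^n$ by affine maps commuting with $L$, so $M$ is an infra-nilmanifold; since $L$ acts by similarities on the stable and unstable subspaces, the underlying nilpotent group carries a flat similarity structure and must be abelian, forcing $M$ to be a finite factor of a torus and $f$ to be $C^\infty$-conjugate to an affine automorphism of it. The principal difficulty, I expect, lies in the regularity theory of the second step together with verifying that the leafwise affine coordinates are mutually compatible across the product structure so that $\Phi$ is globally well defined and smooth; once that coherence is established, the identification of the model is essentially bookkeeping.
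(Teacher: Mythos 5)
First, a point of orientation: the paper you are working from does \emph{not} prove this statement. Theorem \ref{t2} is imported verbatim from Kalinin--Sadovskaya \cite{ks03}; the entire purpose of the present paper is to supply its global product structure hypothesis (Theorem \ref{t1}) so that Theorem \ref{t2} can be invoked to deduce Corollary \ref{c1}. Your proposal must therefore be measured against the original proof in \cite{ks03}, and at the level of strategy it does reproduce it: invariant conformal structures obtained from boundedness of the derivative cocycle orbit in the nonpositively curved space $SL(d,\mathbb{R})/SO(d)$, nonstationary linearization along leaves (Sadovskaya's normal forms, quoted here as Proposition \ref{normal form}), conformality of holonomies, a development map glued together by the global product structure, and an affine action of the deck group.

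Two steps, however, have genuine gaps. (1) You defer the case $\dim E^u = 2$ (``must be handled separately, exploiting the extra rigidity\dots'') without giving any argument. This case cannot be dropped: uniform quasiconformality only requires dimension at least $2$, and the two-dimensional case is precisely the delicate one (it is the reason the present paper exists). Under the GPS hypothesis of \emph{this} theorem it is in fact handled cheaply: GPS makes each stable holonomy globally defined, so in the linearizing coordinates it becomes an injective conformal map of $\mathbb{C}$ onto $\mathbb{C}$, hence affine --- no Liouville theorem is needed. You should say this instead of gesturing at ``extra rigidity.'' (2) Your final descent step is wrong as stated: deck transformations do not commute with the affine lift $L$ of $f$ (they are only normalized by it, i.e.\ $\tilde f \gamma \tilde f^{-1} \in \pi_1(M)$), and, more seriously, a free, properly discontinuous, cocompact \emph{affine} action on $\mathbb{R}^n$ does not by itself imply that $M$ is an infranilmanifold --- that implication is the territory of the Auslander conjecture, which is open in general. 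The actual argument in \cite{ks03} uses that the linear parts of the deck transformations preserve the splitting $E^s \oplus E^u$ and act by similarities on each factor, shows that the similarity ratios must equal $1$ (otherwise proper discontinuity together with cocompactness fails), so that the deck group acts by isometries of a flat metric, and only then applies the genuine Bieberbach theorem to conclude that $M$ is a finite factor of a torus. A smaller but real issue of the same kind occurs in your first step: the closure of the orbit of the standard conformal structure need not vary continuously with the base point, so continuity of the circumcenter section does not follow ``by uniformity of the bound''; this is exactly where transitivity and the more careful argument of Sadovskaya (Proposition \ref{sho0}) are needed.
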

In \cite{ks03}, Kalinin and Sadovskaya establish global product structure for uniformly quasiconformal Anosov diffeomorphisms when $E^u$ and $E^s$ have dimension at least $3$. Their method is to show that the stable and unstable holonomy is conformal, then according to a classical theorem by Liouville, conformal maps on domains in $\mathbb{R}^d$ ($d \geq 3$) are Möbius transformations. Then they apply an argument developed by Ghys \cite{ghy95} to establish the global product structure (see Proposition \ref{ksb}). Their proof meets difficulties in dimension 2, as conformal maps on domains in $\mathbb{R}^2$ may not be Möbius transformations. 

In this note, we establish global product structure for uniformly quasiconformal Anosov diffeomorphisms with two-dimensional distributions:
\begin{theorem}
\label{t1}
Let $f$ be a transitive $C^{\infty}$ uniformly $u$-quasiconformal Anosov diffeomorphism of a compact manifold $M$. If $\dim E^u = 2$, then the stable holonomy is globally defined, i.e., defined on the entire unstable leaf. In particular, $f$ has global product structure. 
\end{theorem}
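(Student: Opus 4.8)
The plan is to endow the unstable leaves with the invariant conformal structure coming from uniform $u$-quasiconformality and then to exploit the global analytic structure of these leaves as Riemann surfaces, since in dimension $2$ Liouville's theorem is unavailable. Recall from \cite{ks03} that uniform $u$-quasiconformality produces a continuous $Df$-invariant conformal structure on $E^u$; as $\dim E^u = 2$, this is a continuously varying field of complex structures, so each leaf $W^u(x)$ becomes a Riemann surface with respect to which both $f$ and the locally defined stable holonomies act conformally, hence (after replacing $f$ by $f^2$ or passing to an orientation double cover) biholomorphically. Because $W^u(x)$ is diffeomorphic to $\mathbb{R}^2$, the uniformization theorem makes it conformally either $\mathbb{C}$ or the disk $\mathbb{D}$. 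The goal thus splits into two tasks: identify the conformal type as $\mathbb{C}$, and show that the germ of the stable holonomy extends to a global biholomorphism between unstable leaves. Once the stable holonomy is globally defined, the global product structure follows from the Ghys-type argument already recorded in Proposition \ref{ksb}.

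To rule out the disk, I would use transitivity, which forces periodic points to be dense. At a periodic point $p$ with $f^m p = p$, the return map $g = f^m|_{W^u(p)}$ is a conformal automorphism of the leaf that fixes $p$ and is uniformly expanding at $p$. An automorphism of $\mathbb{D}$ fixing an interior point is an elliptic rotation and can never be expanding, so $W^u(p)$ must be conformally $\mathbb{C}$; normalizing $p\mapsto 0$, the map $g$ is then the linear model $z\mapsto az$ with $|a|>1$. The more delicate point is to propagate parabolicity from periodic leaves to every leaf: here I would use that uniform quasiconformality keeps the leaf Riemannian metric and the uniformizing coordinate comparable with uniform constants, so that each leaf carries a complete conformal metric of bounded geometry, which on a simply connected surface forces parabolic (i.e.\ $\mathbb{C}$) type. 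Thus every $W^u(x)$ is conformally $\mathbb{C}$.

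For the extension, fix $y\in W^s(x)$; the local stable holonomy $h=h^s_{x,y}$ is a biholomorphism from a neighborhood of $x$ in $W^u(x)$ onto a neighborhood of $y$ in $W^u(y)$. Given any $z\in W^u(x)$, join $x$ to $z$ by a path $\gamma$ of finite length (possible by completeness) and follow the intersection point $\phi(t):=W^s(\gamma(t))\cap W^u(y)$, which is defined and continuous for small $t$ by the local product structure. I would let $T$ be the supremal time up to which $\phi$ is defined and continuous: if $\phi(t)$ stays in a compact part of $W^u(y)$ as $t\to T$, then the local product structure at $\gamma(T)$ lets one continue past $T$, so the only obstruction is escape to infinity in $W^u(y)$. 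The decisive estimate is that the stable distance $d^s(\gamma(t),\phi(t))$ remains uniformly bounded (comparable to $d^s(x,y)$); this is exactly where quasiconformality enters, through forward contraction of stable leaves together with the bounded conformal distortion, which keep the holonomy displacement uniformly controlled. Combining this with the uniform bi-Lipschitz bound for a conformal map in the parabolic (Euclidean-comparable) leaf metric shows $d^u(y,\phi(t))$ stays bounded, so $\phi(t)$ cannot escape and $T=1$. As $z$ is arbitrary, $h$ extends to a global biholomorphism $W^u(x)\to W^u(y)$, with surjectivity by the symmetric argument.

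I expect the main obstacle to lie in the interplay of the last two steps: establishing parabolicity of \emph{every} leaf and then converting it into the boundedness estimate on $d^s(\gamma(t),\phi(t))$. In dimension $3$ and higher one simply invokes Liouville to make the holonomy Möbius and hence automatically global, whereas here the extension must be driven by the parabolic conformal type and the quasiconformal distortion bound rather than by any rigidity of the holonomy maps themselves. Concretely, the crux is controlling the conformal factor of the uniformization uniformly along the noncompact leaves, so that bounded leaf-distance corresponds to bounded Euclidean distance in the $\mathbb{C}$-coordinate; transitivity and the invariance of the conformal structure under the uniformly expanding dynamics are precisely what should make this uniform control available.
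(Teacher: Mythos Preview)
Your proposal takes a genuinely different route from the paper, but the extension step contains a real gap that cannot be closed with the ingredients you list.

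The core issue is the claim that parabolicity of the leaf together with conformality of the holonomy yields a ``uniform bi-Lipschitz bound'' that keeps $d^u(y,\phi(t))$ bounded. This is false: an injective holomorphic map between domains in $\mathbb{C}$ need not have bounded derivative, and its local behavior gives no control on how it stretches long paths. The statement you invoke is essentially that an injective entire map is affine, but that requires the map already to be defined on all of $\mathbb{C}$---precisely what you are trying to prove. Without further structure, a conformal germ can blow up along a compact path in the source, so your continuation argument stalls at exactly the point where the paper's work begins. Relatedly, you misread Proposition~\ref{ksb}: its hypothesis is that the holonomy, read in the normal-form charts $A_x\circ\Phi_x$, is a M\"obius transformation, and its conclusion is that the holonomy is globally defined. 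You cannot invoke it after establishing global definition; you must feed it the M\"obius property.

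What the paper does is supply exactly the missing rigidity. In the normal-form coordinates the local holonomy is already known to be holomorphic; the paper then shows its Schwarzian derivative $q_{xy}$ vanishes. This is done by packaging the diameters $\delta(v)=\mathrm{diam}\{q_{xy}(v):y\in R^i_s\}$ into an area form on the Markov rectangles, proving it is $\phi$-invariant, and using Ghys' combinatorial facts to force $\delta\equiv 0$. Once $q_{xy}=0$ the holonomy is M\"obius, and Proposition~\ref{ksb} delivers the global extension. Your parabolicity argument at periodic leaves is fine (and is in fact recovered for free by the normal forms, which are global charts $\Phi_x:W^u(x)\to E^u_x\cong\mathbb{C}$), but it is the M\"obius property---equivalently, vanishing Schwarzian---rather than parabolicity alone that makes the extension go through.
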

\begin{remark}
The argument of Brin \cite{bri77} cannot be adapted directly in the uniformly quasiconformal setting. As we only have pointwise estimation, $Df(x)$ may differ significantly when $x$ changes. Therefore, we cannot obtain a uniform bound of the radius of closed disks in $W^u(W^s)$ after iteration as Brin did.
\end{remark}
As a corollary, we are able remove an assumption in the global rigidity theorem of Kalinin-Sadovskaya \cite{ks03} and deduce that:
\begin{corollary}
\label{c1}
Let $f$ be a transitive $C^{\infty}$ uniformly quasiconformal Anosov diffeomorphism of a compact manifold $M$, then $f$ is $C^{\infty}$-conjugate to an affine Anosov automorphism of a finite factor of a torus.
\end{corollary}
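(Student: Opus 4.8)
The plan is to deduce the corollary by combining Theorem~\ref{t1} with the results of Kalinin--Sadovskaya. The key observation is that once $f$ is known to have global product structure, Theorem~\ref{t2} immediately upgrades this to a $C^{\infty}$-conjugacy with an affine Anosov automorphism of a finite factor of a torus. Thus the entire task reduces to establishing global product structure for an arbitrary transitive $C^{\infty}$ uniformly quasiconformal Anosov diffeomorphism, with no restriction on the dimensions of $E^u$ and $E^s$ beyond what uniform quasiconformality already forces, namely $\dim E^u \geq 2$ and $\dim E^s \geq 2$.

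I would organize the argument as a case analysis on the pair $(\dim E^u, \dim E^s)$. When both $\dim E^u \geq 3$ and $\dim E^s \geq 3$, global product structure is already supplied by Kalinin--Sadovskaya in \cite{ks03}, via the conformality of the stable and unstable holonomies together with Liouville's theorem and the argument of Ghys recalled in the introduction. When $\dim E^u = 2$, Theorem~\ref{t1} applies directly and yields global product structure. The only remaining case is $\dim E^s = 2$ with $\dim E^u \geq 3$, which I would reduce to the previous case by passing to the inverse diffeomorphism $f^{-1}$.

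The reduction rests on the symmetry between $f$ and $f^{-1}$. Since $f$ is Anosov, so is $f^{-1}$, and its hyperbolic splitting interchanges the bundles: $E^u_{f^{-1}} = E^s_f$ and $E^s_{f^{-1}} = E^u_f$, so that $\dim E^u_{f^{-1}} = 2$. The diffeomorphism $f^{-1}$ is again $C^{\infty}$ and transitive, the latter because topological transitivity is invariant under time reversal. Moreover $f^{-1}$ is uniformly $u$-quasiconformal: for $v \in E^u_{f^{-1}}(x) = E^s_f(x)$ one has $K^u_{f^{-1}}(n,x) = K^s_f(-n,x)$, which is bounded over all $n \in \mathbb{Z}$ and $x \in M$ precisely because $f$ is uniformly $s$-quasiconformal. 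Hence Theorem~\ref{t1} applies to $f^{-1}$ and gives that its stable and unstable foliations, namely $W^u_f$ and $W^s_f$, have global product structure; as the global product structure of a pair of foliations is symmetric in the two foliations, this is the same statement as global product structure for $f$.

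Since global product structure now holds in every case, Theorem~\ref{t2} completes the proof. I do not expect a genuine obstacle here: all the substance is contained in Theorem~\ref{t1}, and the corollary is essentially a matter of assembling it with the prior work. The only points demanding care are that the three cases are exhaustive given $\dim E^u, \dim E^s \geq 2$, and that the inverse diffeomorphism $f^{-1}$ genuinely inherits the hypotheses of Theorem~\ref{t1}, which is the verification carried out above.
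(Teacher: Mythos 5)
Your proposal is correct and coincides with the argument the paper intends (the paper leaves the proof of the corollary implicit): combine Theorem~\ref{t1} (applied to $f$ when $\dim E^u=2$, and to $f^{-1}$ when $\dim E^s=2$, using that $f^{-1}$ is transitive and uniformly $u$-quasiconformal with $E^u_{f^{-1}}=E^s_f$) with the Kalinin--Sadovskaya global product structure result when both distributions have dimension at least $3$, and then invoke Theorem~\ref{t2}. Your verification that $f^{-1}$ inherits the hypotheses and that global product structure is symmetric in the two foliations is exactly the care this assembly requires.
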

The key ingredient in this paper lies in the introduction of the Schwarzian derivative to study uniformly quasiconformal systems. By doing so, we are able to apply the techniques developed in \cite{ghy95}, originally designed for the study of holomorphic Anosov systems, to demonstrate that the Schwarzian derivatives of the holonomy maps vanish and the holonomy maps are Möbius transformations in two-dimensional cases. Then, according to a theorem by Kalinin-Sadovskaya (Proposition \ref{ksb}), this implies that the holonomy maps are globally defined. We believe that the approach of computing the Schwarzian derivative has broader applications for studying quasiconformal dynamical systems.

{\medskip}

{\noindent}{\textbf{Organization of this paper. }} In Section 2, we revisit some geometric properties of uniformly quasiconformal Anosov diffeomorphisms. Section 3 is dedicated to defining the Schwarzian derivative for the holonomy maps and providing proofs for several key lemmas. Finally, in Section 4, we present the proof of Theorem \ref{t1}.

{\medskip}

{\noindent}{\textbf{Acknowledgment. }}The author would like to thank Shaobo Gan, Yuxiang Jiao, and Disheng Xu for discussions and for listening to the proof, thanks Boris Kalinin and Amie Wilkinson for reading the first draft and helpful comments, thanks the anonymous referee for many valuable comments and suggestions. 
{\medskip}

\section{Some geometric preliminaries}
In this section, we recall some properties of quasiconformal Anosov diffeomorphisms established in \cite{ks03,sad05}. We begin with a non-stationary linearization of $f$ along $W^u$:
\begin{proposition}[Proposition 3.3 in {\cite{sad05}}]\label{normal form}
Let $f$ be a $C^{\infty}$ uniformly $u$-quasiconformal Anosov diffeomorphism of a compact manifold $M$. Then for any $x \in M$, there is a family of $C^{\infty}$ diffeomorphisms $\Phi_x : W^u (x) \rightarrow E^u_x$ satisfying the following:
\begin{enumerate}
\item $\Phi_x (x) = 0$ and $D \Phi_x (x) = \text{Id}$;
\item $D f|_{E^u} \circ \Phi_x = \Phi_{f x} \circ f$;
\item The family of diffeomorphisms $\{ \Phi_x \}_{x \in M}$ varies continuously with $x$.
\end{enumerate}
Moreover, the family of diffeomorphisms $\{ \Phi_x \}_{x \in M}$ satisfying the conditions above is unique.
\end{proposition}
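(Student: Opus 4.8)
The plan is to construct $\Phi_x$ as a limit of rescaled backward iterates and to read off all four assertions from uniform estimates supplied by $u$-quasiconformality. First I would fix a continuous family of smooth reference charts $\pi_z : W^u(z) \to E^u_z$ with $\pi_z(z)=0$ and $D\pi_z(z)=\text{Id}$ (for instance the inverse of the exponential map of the induced leaf metric), and write the leafwise dynamics in these charts as $F_z := \pi_{fz}\circ f \circ \pi_z^{-1}$, so that $F_z(0)=0$ and $DF_z(0)=A_z$, where $A_z := Df|_{E^u_z}$. Since $f$ expands $E^u$, the correct linearizing formula uses backward iterates: writing $A^{(n)}_z := Df^n|_{E^u_z}$ for the derivative cocycle and $G^{(n)}_z$ for the chart representation of $f^{-n} : W^u(z)\to W^u(f^{-n}z)$, I would set
\[
\Phi_x := \left( \lim_{n\to\infty} A^{(n)}_{f^{-n}x}\circ G^{(n)}_x \right)\circ \pi_x .
\]

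The analytic heart is the convergence of this limit in the $C^\infty$ topology on compact subsets of the leaf. I would estimate the increments of the sequence $A^{(n)}_{f^{-n}x}\circ G^{(n)}_x$ by isolating the nonlinear part of a single step $\tilde F_w := A_w^{-1}\circ F_w = \text{Id}+(\text{higher order})$ and conjugating it by the contraction $Df^{-n}|_{E^u}$. Each increment is then dominated by the product of the quasiconformal distortion $\| A^{(n)}_w\|\cdot \|(A^{(n)}_w)^{-1}\|$ with a power of the leafwise contraction rate $\|(A^{(n)}_w)^{-1}\|\le C\theta^n$, $\theta<1$. Uniform $u$-quasiconformality bounds the first factor by a constant $K$ independent of $n$ and $w$, so the increments form a geometric series and the limit exists; the same scheme, differentiated up to any order $r$, controls the higher derivatives, since each extra derivative only produces further factors of the bounded distortion. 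Properties (1) and (3) are then immediate: the normalization $\Phi_x(x)=0$, $D\Phi_x(x)=\text{Id}$ survives the limit because every term is normalized, and continuity in $x$ follows from continuity of $z\mapsto A_z$ and $z\mapsto F_z$ together with the uniformity of the convergence. The conjugacy (2) follows from the cocycle identities $A^{(n)}_{f^{-n+1}x}=A_x\circ A^{(n-1)}_{f^{-(n-1)}x}$ and $G^{(n)}_{fx}\circ F_x = G^{(n-1)}_x$ by a telescoping computation, exactly as in the Poincaré linearization of an expanding fixed point.

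For uniqueness, suppose $\Phi_x$ and $\Phi'_x$ both satisfy (1) and (2), and set $H_x := \Phi'_x\circ \Phi_x^{-1} : E^u_x\to E^u_x$. Then $H_x(0)=0$, $DH_x(0)=\text{Id}$, and (2) forces the equivariance $H_{fx}\circ A_x = A_x\circ H_x$, whence $H_x = A^{(n)}_{f^{-n}x}\circ H_{f^{-n}x}\circ (A^{(n)}_{f^{-n}x})^{-1}$ for every $n$. Conjugating the nonlinear remainder of $H$ by the expanding cocycle while controlling the distortion uniformly by $K$, the right-hand side converges to its linear part $DH_x(0)=\text{Id}$ as $n\to\infty$, so $H_x=\text{Id}$ and $\Phi_x=\Phi'_x$.

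The step I expect to be the main obstacle is the $C^\infty$ convergence in the second paragraph, and, more conceptually, the fact that one may linearize all the way to the linear cocycle $A_z$ rather than merely to a polynomial normal form. This is exactly where the hypothesis is essential: $u$-quasiconformality forces the leafwise expansion rates to be comparable at every scale, which eliminates the resonances among expansion exponents that would otherwise obstruct a purely linear model and leave unavoidable polynomial terms. Keeping track of the distortion constant $K$ uniformly in $n$ and along the orbit is the technical crux that makes each of the four conclusions go through.
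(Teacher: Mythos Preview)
The paper does not give its own proof of this proposition: it is quoted verbatim as Proposition~3.3 of \cite{sad05} and used as a black box, so there is no in-paper argument to compare against. Your sketch is a correct outline of the standard non-stationary linearization proof in that reference: one defines $\Phi_x$ as the limit $\lim_{n\to\infty} Df^{n}|_{E^u_{f^{-n}x}}\circ(\text{chart of }f^{-n})$, and uniform $u$-quasiconformality supplies exactly the ``narrow band'' (pinching/bunching) estimate $\|A^{(n)}\|\,\|(A^{(n)})^{-1}\|\le K$ that forces the increments to be summable and, as you note, rules out degree~$\ge 2$ resonances so that the normal form is genuinely linear. Your derivation of (1)--(3) and the uniqueness argument via $H_x=A^{(n)}_{f^{-n}x}\,H_{f^{-n}x}\,(A^{(n)}_{f^{-n}x})^{-1}\to\mathrm{Id}$ are the expected ones.

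One point to tighten if you write this out in full: the sentence ``each extra derivative only produces further factors of the bounded distortion'' is a little quick. The $C^r$ estimate for the increment $A^{(n)}\circ R\circ G^{(n)}$ involves, via Fa\`a di Bruno, not just powers of $\|DG^{(n)}\|$ but also the higher derivatives $\|D^j G^{(n)}\|$ of the long composition $G^{(n)}$; these require their own inductive bound before the distortion constant $K$ finishes the job. Also, in the uniqueness paragraph you implicitly use a uniform $C^2$ bound on $H_{f^{-n}x}$ near $0$ to pass to the limit; this is available precisely because ``the conditions above'' include condition~(3), so it is worth saying that continuity of the family is part of what is being assumed for the competing linearization $\Phi'$.
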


A conformal structure on $\mathbb{R}^d$, where $d \geq 2$, is a class of proportional inner products. The space $\mathcal{C}^d$ of conformal structures on $\mathbb{R}^d$ is identified with the space of real symmetric positive definite $d \times d$ matrices with determinant 1, which is isomorphic to $SL(d, \mathbb{R}) / SO(d, \mathbb{R})$. $GL(d, \mathbb{R})$ acts transitively on $\mathcal{C}^d$ via
\[ X [C] = (\det X^{\top} X)^{- \frac{1}{d}} X^{\top} C X \quad \text{where } X
   \in GL (d, \mathbb{R}) \infixand C \in \mathcal{C}^d . \]
For each $x \in M$, let $\mathcal{C}^u(x)$ be the space of conformal structures on $E^u(x)$. We obtain a continuous bundle $\mathcal{C}^u$ over $M$, with its fibers over $x$ being $\mathcal{C}^u(x)$. A continuous section of $\mathcal{C}^u$ is referred to as a continuous conformal structure. 
\begin{proposition}[Proposition 3.2 in \cite{sad05}]\label{sho0}Let $f$ be a transitive $C^{\infty}$ uniformly $u$-quasiconformal Anosov diffeomorphism of a compact manifold $M$, then there exists a continuous $f$-invariant conformal structure $\tau: M \to \mathcal{C}^u$.
\end{proposition}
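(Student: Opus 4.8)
The plan is to realize $\mathcal{C}^u$ as a bundle whose fibers $\mathcal{C}^u(x)\cong SL(d,\mathbb{R})/SO(d,\mathbb{R})$, with $d=\dim E^u$, carry their natural $GL(d,\mathbb{R})$-invariant Riemannian metric, under which each fiber is a complete, simply connected manifold of nonpositive curvature (a CAT(0) space) and the action $C\mapsto X[C]$ is by isometries. The first step is the elementary comparison between the quasiconformal distortion and the fiber distance: writing $X\in GL(d,\mathbb{R})$ in orthonormal coordinates with singular values $s_1\ge\cdots\ge s_d$, one checks that $d_{\mathcal{C}}(\mathrm{Id},X[\mathrm{Id}])$ is, up to a constant depending only on $d$, comparable to $\log(s_1/s_d)$, i.e. to $\log$ of the distortion of $X$. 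This is pure linear algebra: the eigenvalues of $X[\mathrm{Id}]$ are the $s_i^2$ renormalized to determinant $1$, so $d_{\mathcal{C}}(\mathrm{Id},X[\mathrm{Id}])^2$ is essentially the variance of the numbers $\log s_i$.

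Next I would fix the conformal structure $\sigma$ induced by the Riemannian metric and form the pushed-forward structures $g_n(x):=\bigl(Df^n|_{E^u(f^{-n}x)}\bigr)[\sigma(f^{-n}x)]\in\mathcal{C}^u(x)$ for $n\in\mathbb{Z}$. Since the distortion of $Df^n|_{E^u}$ over these $n$ steps is exactly $K^u(n,f^{-n}x)$, the uniform bound $K^u\le K$ together with the first step shows that every $g_n(x)$ lies in the ball of radius $\lesssim\log K$ centered at $\sigma(x)$. Hence the orbit closure $S(x):=\overline{\{g_n(x):n\in\mathbb{Z}\}}$ is a bounded subset of the CAT(0) fiber $\mathcal{C}^u(x)$, of diameter bounded uniformly in $x$.

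The invariant structure is then produced by the circumcenter. In a complete CAT(0) space every bounded set admits a unique circumcenter (the center of its smallest enclosing ball), and circumcenters are preserved by isometries. I set $\tau(x):=\mathrm{circ}\,S(x)$. The cocycle map $(Df)_x:\mathcal{C}^u(x)\to\mathcal{C}^u(fx)$ is an isometry sending $g_n(x)$ to $g_{n+1}(fx)$, so it carries $S(x)$ isometrically onto $S(fx)$; by isometry-invariance of the circumcenter this yields $(Df)_x[\tau(x)]=\tau(fx)$, and $\tau$ is an $f$-invariant section. This step is dimension-free, valid for every $d\ge2$.

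The main obstacle is continuity of $\tau$, which does not follow formally: the $g_n$ are individually continuous but not equicontinuous in $x$ (the inverse branches $Df^{-n}$ expand without bound), so the sets $S(x)$ need not vary continuously in the Hausdorff metric and the circumcenter could a priori jump. To overcome this I would use that bounded distortion makes the derivative cocycle fiber-bunched, which yields well-defined continuous stable and unstable holonomies $H^{s},H^{u}$ for the cocycle on $\mathcal{C}^u$; because the construction of $\tau$ is canonical along each orbit and the holonomies intertwine the cocycle, $\tau$ is invariant under $H^{s}$ and $H^{u}$. A bounded section that is invariant under $f$ and under both holonomies is continuous on a transitive hyperbolic system: one propagates continuity from a single point along dense orbits using the local product structure and the continuity of $H^{s},H^{u}$, where transitivity is exactly what lets one reach the whole manifold. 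This holonomy-and-transitivity argument is the technical heart and the step I expect to require the most care.
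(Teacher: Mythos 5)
Your first four steps are sound and reproduce the standard Tukia-type argument that underlies the cited result: the identification of the fibers $\mathcal{C}^u(x)$ with the nonpositively curved symmetric space $SL(d,\mathbb{R})/SO(d,\mathbb{R})$, the comparison of the fiber distance with the logarithm of the quasiconformal distortion, the uniform boundedness of the sets $S(x)$, and the exact $f$-invariance of the circumcenter section are all correct. The gap is in the continuity step, and it is not merely a matter of care: the claim that the circumcenter section $\tau$ is invariant under the stable and unstable holonomies is false in general, and ``canonicity along each orbit'' does not deliver it. The set $S(x)$ is built from the reference structure $\sigma$ along the whole two-sided orbit of $x$, and $\sigma$ is neither cocycle- nor holonomy-invariant. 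Using the intertwining relation $H^s_{xy}\circ Df^n_{f^{-n}x}=Df^n_{f^{-n}y}\circ H^s_{f^{-n}x\,f^{-n}y}$ and the fact that $Df^n$ acts isometrically on fibers, one gets, for $y\in W^s(x)$,
\[
d\left(H^s_{xy}(g_n(x)),\,g_n(y)\right)=d\left(H^s_{f^{-n}x\,f^{-n}y}\left[\sigma(f^{-n}x)\right],\,\sigma(f^{-n}y)\right).
\]
This tends to $0$ along the tail where $f^{-n}x$ and $f^{-n}y$ converge, but along the other tail the two points separate along the stable leaf and the right-hand side has no reason to vanish. Hence $H^s_{xy}(S(x))\neq S(y)$ in general, so the circumcenters need not correspond, and your section has no reason to be holonomy-invariant --- which is exactly why it also has no reason to be continuous. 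If you try to repair this by using a one-sided orbit (pull-backs from the future), you gain asymptotic compatibility with $H^s$ but lose exact $f$-invariance: $Df_x[S^-(x)]$ and $S^-(fx)$ differ by a point, so the circumcenter is no longer exactly invariant. Your final reduction (a section invariant under $f$ and both holonomies is continuous, via local product structure) is fine; the unproved and in fact untrue assertion is the holonomy invariance itself.

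For comparison, the proof this paper relies on (Sadovskaya, Proposition 3.2 in \cite{sad05}, following Tukia and Kalinin--Sadovskaya \cite{ks03}) resolves precisely this tension by applying the circumcenter argument only at a \emph{periodic} point $p$, where the relevant family of structures is genuinely invariant under $Df^{k}$ ($k$ the period) and the construction is exact. The structure is then propagated from $p$ along $W^u(\mathcal{O}(p))$ by the cocycle (equivalently, by the unstable holonomies, which exist by the fiber bunching you correctly invoke); uniform continuity of this extension is proved using bounded distortion, and transitivity enters through the density of $W^u(\mathcal{O}(p))$ in $M$, which allows a continuous extension to all of $M$ to which invariance passes in the limit. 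To complete your argument you would need to replace the global circumcenter construction by this periodic-point-plus-propagation scheme, or else exhibit a genuinely holonomy-equivariant family of bounded subsets of the fibers to take centers of --- which the two-sided orbit closures $S(x)$ are not.
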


For each $x \in M$ we extend the conformal structure $\tau(x)$ at $0 \in E^u(x)$ to all other points of $E^u(x)$ via translations. We denote this constant (translation-invariant) conformal structure on $E^u(x)$ by $\sigma(x)$. Since $\tau$ is $f$-invariant, $\sigma$ is $D f$-invariant. Let $A_x \in \text{SL}(2,\mathbb R)$ be a representative element of $\sigma(x)$, i.e., a linear map that take $\sigma(x)$ on $E^u(x)$ to the natural (Euclidean) conformal structure on $\mathbb{C} = \mathbb{R}^2$. The following proposition is essentially established in \cite{sad05}: 

\begin{proposition}[Lemma 3.1 and Theorem 1.4 in {\cite{sad05}}]\label{sho}
Let $f$ be a transitive $C^{\infty}$ uniformly $u$-quasiconformal Anosov diffeomorphism of a compact manifold $M$, then for every $x \in M$, $y \in W^s (x)$, and $U_x\subset W^u(x)$ such that the stable holonomy map $h^s_{xy}$ is well-defined on $U_x$, the map $\Phi_y \circ h^s_{x y} \circ \Phi_x^{-1} : \Phi_x(U_x) \rightarrow E^u_y$ is conformal with respect to $\sigma(x)$ and $\sigma(y)$. In particular, $h^s_{x y}$ is $C^\infty$ and
\[A_y \circ \Phi_y \circ h^s_{x y} \circ \Phi_x^{- 1} \circ A_x^{- 1}  : A_x \circ \Phi_x(U_x) \rightarrow \mathbb C\]
is a conformal (thus holomorphic) map on $A_x \circ \Phi_x(U_x) \subset \mathbb C$.
\end{proposition}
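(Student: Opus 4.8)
The plan is to exploit the equivariance of the stable holonomy under $f$ together with the $Df$-invariance of the conformal structure $\sigma$, and to run a renormalization (nonstationary) argument in the linearizing coordinates $\Phi$. Write $D^n_x := Df^n|_{E^u_x} : E^u_x \to E^u_{f^n x}$, and let $g^s_{xy} := \Phi_y \circ h^s_{xy} \circ \Phi_x^{-1}$ be the conjugated holonomy. Combining the intertwining relation $D^n_x \circ \Phi_x = \Phi_{f^n x} \circ f^n$ of Proposition \ref{normal form} with the holonomy identity $h^s_{f^n x, f^n y} = f^n \circ h^s_{xy} \circ f^{-n}$, I would first record that
\[ g^s_{xy} = (D^n_y)^{-1} \circ g^s_{f^n x, f^n y} \circ D^n_x \quad \text{for all } n . \]
Since $\sigma$ is $Df$-invariant and translation invariant, each $D^n_x$ is a $\sigma$-conformal linear map, and $\sigma$-conformal maps compose without changing the conformal distortion. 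Hence the conformal distortion of $g^s_{xy}$ at a point with respect to $\sigma$ equals that of $g^s_{f^n x, f^n y}$ at the corresponding image point, for every $n$.

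Next I would establish conformality first at the basepoint and then propagate it. Evaluating the displayed identity at $0 = \Phi_x(x)$, linearity of $D^n_x$ fixes the evaluation point at $0$, so no expansion occurs and $Dg^s_{xy}(0)$ equals the derivative of $h^s_{f^n x, f^n y}$ at the basepoint, conjugated by identities. Because $y \in W^s(x)$ forces $d(f^n x, f^n y) \to 0$, this holonomy between the two nearly coincident unstable leaves converges in $C^1$ to the canonical identification of their tangent spaces, whose distortion with respect to $\sigma$ is $1$; hence $Dg^s_{xy}(0)$ is $\sigma$-conformal. To reach an arbitrary $p = \Phi_x(\xi)$, I would rebase at $\xi$ and $\eta := h^s_{xy}(\xi)$: the stable holonomy from $W^u(x)$ to $W^u(y)$ coincides with the one based at $\xi$, so $Dg^s_{xy}(p) = D(\Phi_y \circ \Phi_\eta^{-1}) \circ Dg^s_{\xi\eta}(0) \circ D(\Phi_\xi \circ \Phi_x^{-1})(p)$. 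The middle factor is $\sigma$-conformal by the basepoint case. For the transition maps $P_{x\xi} := \Phi_\xi \circ \Phi_x^{-1}$ on the common unstable leaf, the same intertwining gives $P_{f^{-m}x, f^{-m}\xi} = D^{-m}_\xi \circ P_{x\xi} \circ (D^{-m}_x)^{-1}$; iterating \emph{backward} brings $f^{-m}x, f^{-m}\xi$ together (the unstable leaf contracts), so $P_{f^{-m}x, f^{-m}\xi} \to \mathrm{Id}$ in $C^1$ and the transition derivatives are $\sigma$-conformal as well. Composing three $\sigma$-conformal maps shows $Dg^s_{xy}(p)$ is $\sigma$-conformal at every $p$.

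Finally I would promote this to the stated smoothness and holomorphicity. A priori the holonomy is only quasiconformal, with distortion uniformly bounded by the quasiconformality constant of $f$, which is the standard input that makes the a.e. derivatives above meaningful; the previous step then shows that $g^s_{xy}$ is $1$-quasiconformal with respect to the constant structures $\sigma(x), \sigma(y)$. Conjugating by the normalizing linear maps $A_x, A_y$ turns it into a $1$-quasiconformal homeomorphism of plane domains for the Euclidean structure, and by Weyl's lemma (or Looman--Menchoff) such a map is holomorphic, hence $C^\infty$. Thus $A_y \circ \Phi_y \circ h^s_{xy} \circ \Phi_x^{-1} \circ A_x^{-1}$ is holomorphic and $h^s_{xy} = \Phi_y^{-1} \circ g^s_{xy} \circ \Phi_x$ is $C^\infty$.

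I expect the main obstacle to be the two $C^1$-convergence statements — that the stable holonomy $h^s_{f^n x, f^n y}$ and the transition maps $P_{f^{-m}x, f^{-m}\xi}$ converge to the identity in $C^1$, uniformly enough to pass to the limit in the distortion — since only pointwise information is available and one must genuinely invoke the uniform quasiconformality of $f$ together with the continuity of the family $\{\Phi_x\}$ to control derivatives rather than merely the maps. The choice of iteration direction, forward for the basepoint holonomy and backward for the transition maps, is precisely what keeps the evaluation points from escaping to infinity, and is the crux that lets the pointwise hypothesis suffice where Brin's uniform argument cannot be applied.
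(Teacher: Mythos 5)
First, a contextual point: the paper does not prove this proposition at all — it is imported from Sadovskaya \cite{sad05} (Lemma 3.1 and Theorem 1.4), as the bracketed attribution indicates. So your proposal has to stand on its own, measured against the proof in \cite{sad05}. Its skeleton is the right one and matches the strategy there: the renormalization identity $g^s_{xy} = (D^n_y)^{-1} \circ g^s_{f^n x, f^n y} \circ D^n_x$, the $\sigma$-conformality of $Df|_{E^u}$ coming from the invariance of the conformal structure, and the final upgrade ``$1$-quasiconformal $\Rightarrow$ holomorphic $\Rightarrow$ $C^\infty$'' via Weyl's lemma.

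The genuine gap is the step you yourself flag as ``the main obstacle'' but then treat as a technical verification: the claim that $h^s_{f^n x, f^n y}$ converges in $C^1$ to the canonical identification. A priori the stable holonomy between unstable leaves is only a homeomorphism (the stable foliation is merely H\"older), so it is not known to be differentiable at any prescribed point — in particular not at the basepoint where you evaluate $Dg^s_{\xi\eta}(0)$ — and ``$C^1$-convergence'' cannot even be formulated; establishing any regularity of the holonomy is precisely the content of the proposition being proved, so this step is close to circular. Your fallback (a.e.\ differentiability from quasiconformality) does not close the gap for two reasons. First, uniform quasiconformality of the holonomy maps is itself Lemma 3.1 of \cite{sad05} — half of the cited result — established there by a delicate distortion estimate, not a free ``standard input''. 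Second, and more fundamentally, even granting it: at a point $p$ where $Dg^s_{xy}$ exists, the renormalization identity gives that its $\sigma$-distortion equals that of $g^s_{f^n x, f^n y}$ at the point $D^n_x p$; to conclude it is $1$ you need the pointwise distortion of the nearby-leaf holonomy at that moving point to tend to $1$. Locally uniform $C^0$-convergence of uniformly quasiconformal maps to the identity gives no such pointwise (a.e.) control — Beltrami coefficients do not pass to locally uniform limits — and worse, the evaluation points $D^n_x p$ escape every compactum because $D^n_x$ expands along $E^u$, so even locally uniform information about $g^s_{f^n x, f^n y}$ near its basepoint says nothing about the distortion where you need it. Controlling the distortion of holonomies at all small scales simultaneously, rather than the mere $C^0$-closeness of the leaves, is exactly the real work in \cite{sad05}. (By contrast, your backward-iteration treatment of the transition maps $P_{x\xi} = \Phi_\xi \circ \Phi_x^{-1}$ is sound, since those are compositions of the smooth charts $\Phi$, which vary continuously, and there the evaluation points $D^{-m}_x p$ stay in a compactum.)
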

\begin{remark}\label{remark non unique}
  The choice of $A_x$ is not unique. A matrix $A'_x \in \text{SL}(2,\mathbb R)$ is a representative element of $\sigma(x)$ if and only if $A'_x =O_x A_x$ for some $O_x \in \text{SO}(2,\mathbb R)$. Therefore, $A_y \circ \Phi_y \circ h^s_{xy} \circ \Phi_x^{-1} \circ A_x^{-1}$ is always holomorphic for any choice of representative elements. Although these holomorphic functions depend on the choice of representative elements, we will show that they have the same Schwarzian derivative in the next section.
\end{remark}

\section{Schwarzian derivative}
Recall that the Schwarzian derivative of a holomorphic function $f : U \rightarrow \mathbb{C}$ is a holomorphic quadratic differential:
\[ S f  \assign \left( \left( \frac{f''' }{f' } \right) - \frac{3}{2}
   \left( \frac{f'' }{f' } \right)^2 \right) d z^2 . \]
The condition $Sf = 0$ holds if and only if $f$ belongs to $\text{PSL}(2, \mathbb{C})$ is a Möbius transform. For any holomorphic function $g:f(U) \to \mathbb C$, the Schwarzian derivative of the composition of holomorphic functions is given by:
\begin{equation}\label{e1}
     S (f \circ g)  = Sf\circ dg+Sg.
\end{equation}
In particular,
\begin{equation}\label{e2}
S (g_1 \circ f \circ g_2) = Sf\circ dg_2
\end{equation}
if both $g_1$ and $g_2$ are Möbius transformations. 

{\medskip}

Recall that by Proposition \ref{sho}, for every $x \in M, y \in W^s(x)$ and $U\subset \mathbb C$ such that $h^s_{x y}$ is well-defined on $\Phi_x^{- 1} \circ A_x^{- 1}(U)$, 
\[ A_y \circ \Phi_y \circ h^s_{x y} \circ \Phi_x^{- 1} \circ A_x^{- 1} :
   U \rightarrow \mathbb{C} \]
is conformal and holomorphic and maps $0$ to $0$. In particular, we can compute its Schwarzian derivative. For any $z \in \Phi_x^{- 1} \circ A_x^{- 1}(U)$ and $v \in E^u_z$, we define
\[ q_{x y}(v) \assign S (A_y \circ \Phi_y \circ h^s_{x y} \circ \Phi_x^{- 1}
   \circ A_x^{- 1})(A_x  v). \]
It follows from the definition that $q_{xy}(v)$ is continuous with respect to $x,y$ and $v$. As pointed out in Remark \ref{remark non unique}, $A_x$ and $A_y$ are not uniquely determined by $\sigma(x)$ and $\sigma(y)$, so we need to prove that $q_{x y}$ is well-defined.

\begin{lemma}
  \label{l1}$q_{x y}$ is well-defined, i.e., $q_{x y}$ does not depend on the choice of $A_x$ and $A_y$.
\end{lemma}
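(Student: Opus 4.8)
The plan is to reduce the independence statement to the transformation rule \eqref{e2} for the Schwarzian derivative under pre- and post-composition with Möbius maps. First I would isolate the part of the construction that does not depend on the representatives: set $F \assign \Phi_y \circ h^s_{xy} \circ \Phi_x^{-1}$, which is determined by $f$ alone, so that the holomorphic map appearing in the definition of $q_{xy}$ is $G \assign A_y \circ F \circ A_x^{-1}$. By Remark \ref{remark non unique}, any other admissible pair of representatives has the form $A'_x = O_x A_x$ and $A'_y = O_y A_y$ with $O_x, O_y \in \text{SO}(2,\mathbb{R})$, and the corresponding holomorphic map is
\[ G' = A'_y \circ F \circ (A'_x)^{-1} = O_y \circ G \circ O_x^{-1}. \]

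The key observation is that under the identification $\mathbb{R}^2 = \mathbb{C}$ a rotation $O \in \text{SO}(2,\mathbb{R})$ acts as multiplication by a unit complex number, hence is a Möbius transformation. I would therefore apply \eqref{e2} with $g_1 = O_y$ and $g_2 = O_x^{-1}$ to conclude that $SG' = SG \circ d(O_x^{-1})$, i.e. $SG'$ is the pullback of the quadratic differential $SG$ under the rotation $O_x^{-1}$. Concretely, writing $SG = s(\zeta)\, d\zeta^2$ and $O_x$ as multiplication by $e^{i\theta}$, this reads $SG'(\zeta) = s(e^{-i\theta}\zeta)\, e^{-2i\theta}$.

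Finally I would track the evaluation point. In the definition of $q_{xy}$ the Schwarzian is evaluated, as a quadratic differential, at the base point $A_x \Phi_x(z)$ on the vector $A_x v$; with the new representatives both the base point and the vector are rotated by $O_x$, becoming $O_x A_x \Phi_x(z)$ and $O_x A_x v = A'_x v$. Substituting these into $SG'$ produces three factors of $e^{\pm i\theta}$: the pullback contributes $e^{-2i\theta}$, the rotated base point cancels the argument of $s$, and squaring the rotated vector contributes $e^{2i\theta}$. These cancel exactly, leaving $q'_{xy}(v) = s\big(A_x\Phi_x(z)\big)\,(A_x v)^2 = q_{xy}(v)$, which is the desired independence. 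The post-composition factor $O_y$ plays no role here, since by \eqref{e2} post-composition with a Möbius map leaves the Schwarzian unchanged.

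The main obstacle, and the point requiring the most care, is precisely this last bookkeeping: the Schwarzian $SG$ is \emph{not} invariant under the change of representatives — it acquires a conformal factor — so the invariance of $q_{xy}$ is a genuine cancellation between the transformation of the quadratic differential and the simultaneous transformation of the vector $A_x v$ at which it is evaluated. Recognizing that $q_{xy}$ must be read as a quadratic differential paired with a tangent vector, rather than as a scalar attached to an abstract point, is what makes the two rotation factors cancel and is the crux of the argument.
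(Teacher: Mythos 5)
Your proof is correct and follows essentially the same route as the paper: both reduce to equation \eqref{e2} after writing $A'_x = O_x A_x$, $A'_y = O_y A_y$ and noting that rotations are Möbius transformations, so that the pullback of the Schwarzian by $O_x^{-1}$ cancels exactly against the rotation of the evaluation point and vector. Your explicit bookkeeping with the $e^{\pm i\theta}$ factors is simply an unpacked version of the paper's compact computation $\bigl(S(A_y \circ \Phi_y \circ h^s_{xy} \circ \Phi_x^{-1} \circ A_x^{-1}) \circ O_x^{-1}\bigr)(O_x A_x v) = S(A_y \circ \Phi_y \circ h^s_{xy} \circ \Phi_x^{-1} \circ A_x^{-1})(A_x v)$.
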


\begin{proof}
Let $A_x', A_y' \in \text{SL}(2, \mathbb{R})$ be another representative elements of $\sigma(x)$ and $\sigma(y)$ respectively, then $A_x' = O_x A_x$ and $A_y' = O_y A_y$ for some $O_x, O_y \in SO(2,\mathbb{R})$. For any $v \in E^u_x$, we have:
\begin{eqnarray*}
    &  & S (A_y' \circ \Phi_y \circ h^s_{x y} \circ \Phi_x^{- 1} \circ
    (A'_x)^{- 1})(A_x' v) \\
    & = & S (O_y \circ A_y \circ \Phi_y \circ h^s_{x y} \circ \Phi_x^{- 1}
    \circ A_x^{- 1} \circ O_x^{- 1})(O_x A_x v) \\
 (\text{by } (\ref{e2}))   & = & (S (A_y \circ \Phi_y \circ h^s_{x y}
    \circ \Phi_x^{- 1} \circ A_x^{- 1}) \circ O_x^{- 1})(O_x A_x v)  \\
    & = & S (A_y \circ \Phi_y \circ h^s_{x y} \circ \Phi_x^{- 1} \circ A_x^{-
    1}) (A_x v) .
  \end{eqnarray*}
Therefore, $q_{x y}$ does not depend on the choice of $A_x$ and $A_y$ and is well-defined.
\end{proof}
We then prove several properties of $q_{x y}$ which will be used later:

\begin{lemma}
  \label{l2}For any $x \in M$, $y \in W^s (x)$, and $v \in E^u_x$, we have:
  \[ q_{x y} (v) = q_{f^{- 1} x f^{- 1} y} \left( D f^{- 1}(v) \right) . \]
\end{lemma}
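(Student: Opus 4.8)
The plan is to reduce $q_{f^{-1}x\,f^{-1}y}$ to $q_{xy}$ by conjugating the defining holomorphic map with $f$ and exploiting two commutation relations. First I would record the equivariance of the stable holonomy, namely $f\circ h^s_{f^{-1}x\,f^{-1}y}=h^s_{xy}\circ f$ (immediate from $f(W^u(\cdot))=W^u(f\cdot)$ and $f(W^s(\cdot))=W^s(f\cdot)$), together with the linearizing relations coming from Proposition \ref{normal form}: writing $x'=f^{-1}x$ and $y'=f^{-1}y$, the normal form gives $\Phi_{x'}\circ f^{-1}=(Df^{-1}|_{E^u_x})\circ\Phi_x$ and $f\circ\Phi_{x'}^{-1}=\Phi_x^{-1}\circ(Df|_{E^u_{x'}})$, and likewise for $y$. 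Here $Df^{\pm1}|_{E^u}$ are genuine linear maps between the fibers of $E^u$.

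Substituting these into the definition of the holomorphic map $G_{x'y'}:=A_{y'}\circ\Phi_{y'}\circ h^s_{x'y'}\circ\Phi_{x'}^{-1}\circ A_{x'}^{-1}$, I expect to obtain a factorization
\[
G_{x'y'}=P\circ G_{xy}\circ Q,\qquad P=A_{y'}\,(Df^{-1}|_{E^u_y})\,A_y^{-1},\quad Q=A_x\,(Df|_{E^u_{x'}})\,A_{x'}^{-1},
\]
where $G_{xy}=A_y\circ\Phi_y\circ h^s_{xy}\circ\Phi_x^{-1}\circ A_x^{-1}$ is the map whose Schwarzian defines $q_{xy}$. The crucial point is that $P$ and $Q$ are Möbius transformations: since $\sigma$ is $Df$-invariant, $Df|_{E^u}$ sends $\sigma(x')$ to $\sigma(x)$ conformally, and as the representatives $A_\bullet$ carry $\sigma(\bullet)$ to the standard structure on $\mathbb{C}$, both $P$ and $Q$ are conformal linear self-maps of $\mathbb{C}$ fixing $0$, hence of the form $z\mapsto az$ — in particular holomorphic and Möbius.

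With $P,Q$ Möbius, formula (\ref{e2}) gives $S(G_{x'y'})=S(G_{xy})\circ dQ$, i.e. the Schwarzian of $G_{x'y'}$ is the pullback by $Q$ of the Schwarzian of $G_{xy}$. It then remains to evaluate both sides at the correct base point and tangent vector. I would check, using the differentiated normal form $\Phi_x\circ f=(Df|_{E^u_{x'}})\circ\Phi_{x'}$, that $Q$ maps the base point $A_{x'}\Phi_{x'}(f^{-1}z)$ to $A_x\Phi_x(z)$, and that under $dQ$ the vector attached to $v'=Df^{-1}(v)$ is carried exactly to the vector attached to $v$ (the two $Df$ factors cancelling by the chain rule). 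Feeding this into the pullback formula yields $q_{x'y'}(Df^{-1}v)=q_{xy}(v)$, which is the assertion.

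The main obstacle I anticipate is not the algebra but pinning down that $P$ and $Q$ are \emph{holomorphic} Möbius maps rather than merely conformal similarities (a priori they could be anti-holomorphic). This is exactly where the $Df$-invariance of $\sigma$ and the compatible continuous complex structure underlying Proposition \ref{sho} must be used; once holomorphicity is secured, the chain-rule bookkeeping for the quadratic differential is routine and the cancellation of the $Df$ factors delivers the stated equivariance.
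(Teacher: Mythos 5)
Your proof is correct and is essentially the paper's own argument: the paper combines the same holonomy equivariance $h^s_{xy} = f \circ h^s_{f^{-1}x\,f^{-1}y} \circ f^{-1}$ and the normal form relation $Df|_{E^u} \circ \Phi_x = \Phi_{fx} \circ f$ to factor $A_y \circ \Phi_y \circ h^s_{xy} \circ \Phi_x^{-1} \circ A_x^{-1}$ as (conformal linear map) $\circ\, (A_{f^{-1}y} \circ \Phi_{f^{-1}y} \circ h^s_{f^{-1}x f^{-1}y} \circ \Phi_{f^{-1}x}^{-1} \circ A_{f^{-1}x}^{-1}) \,\circ$ (conformal linear map) and then applies (\ref{e2}), so your factorization $G_{x'y'} = P \circ G_{xy} \circ Q$ is the same identity read in the reverse direction. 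The holomorphicity point you flag is treated no more carefully in the paper, which simply asserts that $A_z \circ Df \circ A_{f^{-1}z}^{-1}$ is a conformal linear map on $\mathbb{C}$ before invoking (\ref{e2}).
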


\begin{proof}
It follows the definitions that
  \[ A_z \circ D f \circ A_{f^{- 1} z}^{-1} \]
  is a conformal linear map on $\mathbb{C}$ for any $z \in M$. A direct calculation implies:
\begin{eqnarray*}
    q_{x y} (v)& = & S (A_y \circ \Phi_y \circ h^s_{x y} \circ \Phi_x^{- 1}
    \circ A_x^{- 1}) (A_x v)\\
    & = & S (A_y \circ \Phi_y \circ (f \circ h^s_{f^{- 1} x f^{- 1} y} \circ
    f^{- 1}) \circ \Phi_x^{- 1} \circ A_x^{- 1}) (A_x v)\\
   & = & S \left( A_y \circ D f \circ \Phi_{f^{- 1} y}
    \circ h^s_{f^{- 1} x f^{- 1} y} \circ \Phi_{f^{- 1} x}^{- 1} \circ 
    D f ^{- 1} \circ A_x^{- 1} \right) (A_x v)\\
    & = & S \left( (A_y \circ D f \circ A_{f^{- 1} y}^{-
    1}) \circ (A_{f^{- 1} y} \circ \Phi_{f^{- 1} y} \circ h^s_{f^{- 1} x f^{-1} y} \circ \Phi_{f^{- 1} x}^{- 1} \circ A_{f^{- 1} x}^{- 1}) \right.\\
    & & \left. \circ ( A_{f^{- 1} x}  \circ  D f^{- 1} \circ
    A_x^{- 1} )\right) (A_x v)\\
(\text{by (\ref{e2})})    & = & S (A_{f^{- 1} y} \circ \Phi_{f^{- 1} y} \circ h^s_{f^{- 1} x f^{-
    1} y} \circ \Phi_{f^{- 1} x}^{- 1} \circ A_{f^{- 1} x}) \left( A_{f^{- 1}
    x} \circ  D f^{- 1} (v) \right)\\
    & = & q_{f^{- 1} x f^{- 1} y} \left( D f^{- 1}(v) \right) .
\end{eqnarray*} 
\end{proof}

\begin{lemma}
  \label{l3}
  For any $x \in M$ and a simply connected region $U \subset W^s(x)$, the following equality holds for any $v \in E^u_x$ and $x' \in W^s(x)$:
  \[ \text{diam} \{ q_{x y}(v) : y \in U \} = \text{diam} \{ q_{x' y}(D h_{x x'}^s  (v)) : y \in U \}, \]
  where ``diam" denotes the diameter of a set in $\mathbb{C}$ with respect to the canonical metric.
\end{lemma}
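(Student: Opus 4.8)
The plan is to combine the cocycle property of the stable holonomy with the composition law (\ref{e1}) for the Schwarzian derivative, and then to show that the two sets in the statement differ by a single translation of $\mathbb C$, which preserves diameters. Throughout I fix choices of representatives $A_x, A_{x'}, A_y$; by Lemma \ref{l1} the resulting values of $q$ are independent of these choices, so no generality is lost.

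To lighten notation, write $H_{xy} \assign A_y \circ \Phi_y \circ h^s_{xy}\circ \Phi_x^{-1}\circ A_x^{-1}$, a holomorphic germ with $H_{xy}(0)=0$, so that $q_{xy}(v)$ is $S(H_{xy})$ evaluated at $0$ on the vector $A_x v$. The first step is the cocycle relation $h^s_{xy} = h^s_{x'y}\circ h^s_{xx'}$, valid for $x' \in W^s(x)$ and $y \in U$; here the simple connectivity of $U$ is what guarantees that all the germs involved are unambiguously defined and compose correctly. Inserting $\Phi_{x'}^{-1}\circ A_{x'}^{-1}\circ A_{x'}\circ \Phi_{x'}$ between the two factors yields the conjugated cocycle relation $H_{xy} = H_{x'y}\circ H_{xx'}$.

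Next I apply (\ref{e1}) to obtain $S(H_{xy}) = S(H_{x'y})\circ dH_{xx'} + S(H_{xx'})$ and evaluate both sides at $0$ on the vector $A_x v$. Since $H_{xx'}(0)=0$, the first term becomes $S(H_{x'y})$ evaluated at $0$ on $dH_{xx'}|_0(A_x v)$. The key computation is the differential $dH_{xx'}|_0$: by Proposition \ref{normal form} we have $D\Phi_x(x)=\text{Id}$ and $D\Phi_{x'}(x')=\text{Id}$, so the normal-form factors drop out of the chain rule and $dH_{xx'}|_0 = A_{x'}\circ Dh_{xx'}^s\circ A_x^{-1}$, whence $dH_{xx'}|_0(A_x v) = A_{x'}\,Dh_{xx'}^s(v)$. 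Recognizing this as the argument of $q_{x'y}$, the first term is exactly $q_{x'y}(Dh_{xx'}^s(v))$, and we arrive at the identity $q_{xy}(v) = q_{x'y}(Dh_{xx'}^s(v)) + S(H_{xx'})(A_x v)$, with $S(H_{xx'})$ evaluated at $0$ on $A_x v$.

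The point is that the last summand $S(H_{xx'})(A_x v)$ does not involve $y$. Hence, as $y$ ranges over $U$, the set $\{q_{xy}(v) : y \in U\}$ is the translate by this fixed constant of the set $\{q_{x'y}(Dh_{xx'}^s(v)) : y \in U\}$, and since translations of $\mathbb C$ are isometries the two diameters coincide, as claimed. I expect the only delicate points to be bookkeeping: verifying that the relevant holonomy germs are all defined and satisfy the cocycle law over the simply connected region $U$ (so that $H_{xy}=H_{x'y}\circ H_{xx'}$ is legitimate as germs at $0$), and confirming that the extra term is genuinely $y$-independent; both should follow directly from the definitions together with Proposition \ref{sho}.
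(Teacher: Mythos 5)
Your proposal is correct and follows essentially the same route as the paper: both decompose the holonomy via the cocycle relation $h^s_{xy}=h^s_{x'y}\circ h^s_{xx'}$, apply the composition formula (\ref{e1}) for the Schwarzian, use $D\Phi_x(x)=\mathrm{Id}$ and $D\Phi_{x'}(x')=\mathrm{Id}$ to identify the derivative at $0$ as $A_{x'}\circ Dh^s_{xx'}\circ A_x^{-1}$, and conclude that the two sets differ by the $y$-independent translation $q_{xx'}(v)$, which preserves diameters. No gaps.
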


\begin{proof}
  For any $y \in U$, we have
  \begin{eqnarray*}
    q_{x y}(v) & = & S (A_y \circ \Phi_y \circ h^s_{x y} \circ \Phi^{-1}_x \circ A_x^{-1}) (A_x v)\\
    & = & S ((\nobracket A_y \circ \Phi_y \circ h^s_{x' y} \circ \Phi_{x'}^{-1} \circ A_{x'}^{-1}) \circ (A_{x'} \circ \Phi_{x'} \circ h_{x x'}^s \circ \Phi_x^{-1} \circ A_x^{-1})) (A_x v) \nobracket\\
   (\text{by (\ref{e1})}) & = & S (A_y \circ \Phi_y \circ h^s_{x' y} \circ \Phi_{x'}^{-1} \circ A_{x'}^{-1}) (D_{0}(A_{x'} \circ \Phi_{x'} \circ h_{x x'}^s \circ \Phi_x^{-1}\circ A_x^{-1}) (A_x v)) + q_{x x'} (v)\\
    & = & q_{x' y}(D  h_{x x'}^s (v)) + q_{x x'} (v),
  \end{eqnarray*}
where $D_0$ denoting taking derivative at $0 \in \mathbb C$. Note that the translation by the vector $q_{x x'} (v)$ does not change the diameter of the set, we obtain
  \[ \text{diam} \{ q_{x y}(v) : y \in U \} = \text{diam} \{ q_{x' y}(D  h_{x x'}^s (v)) : y \in U \}. \]
\end{proof}

\section{Proof of Theorem \ref{t1}}
In \cite{ghy95}, Ghys proved that the stable holonomy for holomorphic Anosov diffeomorphisms is a Möbius transformation when $\dim_{\mathbb{C}} E^u=1$ by computing the Schwarzian derivative (see also \cite{ghy93} for the case when $\dim_{\mathbb{R}} E^u=1$). We regard $q_{xy}$ in Lemma \ref{l1} as a substitution for the Schwarzian derivative and generalize Ghys' argument to the quasiconformal case, showing that the holonomy maps are Möbius transformations up to a linear coordinate change. The global product structure is then an immediate corollary.

We first review some definitions and introduce notations related to the Markov partition. Let $f$ be an Anosov diffeomorphism of $M$. For every $\varepsilon > 0$, there exists a $\eta > 0$ such that if two points, $x$ and $y$, in $M$ are at a distance less than $\eta$, then the discs within the unstable leaves $W^u(x)$ and stable leaves $W^s(y)$ with centers at $x$ and $y$, and radii $\varepsilon$, intersect at a single point denoted as $[x, y]$.

A rectangle is a compact connected set $R$ in $M$, which is the closure of its interior, with a diameter less than $\eta$, and such that if $x$ and $y$ are in $R$, then $[x, y]$ is also in $R$. If $R$ is a rectangle and $x \in R$, we define:
\[ R_u (x) = \{ [x, y], y \in R \} , \quad R_s (x) = \{ [y, x], y \in R \}.\]
A Markov partition for $f$ is a finite collection of rectangles $R^1, \ldots, R^N$ covering $M$, with disjoint interiors, satisfying the following property. If $x$ belongs to the interior of $R^i$ and $f (x)$ belongs to the interior of $R^j$, then
\[ R_u^j (f (x)) \subset f (R_u^i (x)), \quad R_s^i (x) \subset f^{- 1} (R_s^j (f (x)))\]
We can always assume that the boundaries of $R^i_u$ and $R^i_s$ have zero Lebesgue measure (see \cite{man87}) and here we do not need the transitivity assumption (see \cite{bow70} and \cite{sin68} for existence and properties of Markov partition). 

For each rectangle $R^i$, we choose a base point $x_i$ in the interior of
$R^i$ and we shall simply write $R^i_u$ and $R^i_s$ instead of $R^i_u (x_i)$
and $R^i_s (x_i)$ respectively. Sometimes, when we do not want to specify to
which rectangle $R^i$ a point $x$ belongs, we write $R_u (x)$ and $R_s (x)$
instead of $R^i_u (x)$ and $R^i_s (x)$ respectively.

Let $\mathcal{U}$ be the disjoint union of the $R^i_u$ for $i = 1, \ldots, N$. Let $x \in \mathcal{U}$. If $f (x)$ belongs to the interior of $R^j$, we set $\phi (x) = [x_j, f (x)]$. In this way, we obtain a partially defined expanding map $\phi$ from an open dense subset of $\mathcal{U}$ to $\mathcal{U}$. We shall not attempt to define $\phi (x)$ for those points $x$ such that $f (x)$ belongs to two rectangles. However, we note the following. For each $i = 1, \ldots, N$, there is a finite collection of diffeomorphisms $\psi_1^i, \ldots, \psi_{N_i}^i$ from $R^i_u$ to $\mathcal{U}$ which are branches of the inverse of $\phi$, i.e., such that if $\phi (x) = y$ is defined and $x$ is in $\mathcal{U}$, then $x$ is one of the points $\psi^i_1 (y), \ldots, \psi_{N_i}^i (y)$. If the Markov partition is sufficiently thin, one can always assume that for each $i$, the image of the $\psi^i_1, \ldots, \psi_{N_i}^i$ is contained in different $R^j_u$.

After these preliminaries, we can begin the proof of Theorem \ref{t1}. 

Let $v$ be a vector tangent to $\mathcal{U}$, thought of as a vector tangent to $W^u$ at some point $x$ in some $R^i_u$. For fixed $v$, we consider the continuous map defined on $R_s^i (x)$
\[ y \in R_s^i (x) \mapsto q_{x y} (v) \in \mathbb{C}, \]
where $q_{xy}$ is defined in the previous section. Let $\delta (v) = \text{diam} \{ q_{x y} (v) : y \in R^i_s \}$. Recall that $q_{x y}$ is defined by the Schwazian derivative, which is a holomorphic quadratic differential, so we have
\[\delta (\lambda v) =  \lambda ^2 \delta (v)\]
for any $\lambda \in \mathbb{R}$. Hence, $\delta$ has the tensorial character of an area form, i.e., a 2-form of type (1,1). More precisely, let $z$ be a holomorphic coordinate in a local unstable leaf $W^u_{\text{loc}}(x)$, then the 2-form $\mu: = \delta(\frac{\partial}{\partial z}) d z d \bar z$  does not depend on the choice of this parameter. In this way, $\mu$ can be regarded as a natural measure on $\mathcal{U}$.

\begin{lemma}
  \label{inv}The measure $\mu$ is invariant under $\phi$.
\end{lemma}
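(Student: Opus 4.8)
The plan is to establish the two-sided identity by first proving the single inequality $\phi_\ast\mu \ge \mu$ and then upgrading it to an equality through a global mass argument, rather than attempting to verify invariance pointwise. I would begin by recording the local form of the pushforward. Fix a target point $y$ in some $R^i_u$ and enumerate its $\phi$-preimages $x_k = \psi^i_k(y)$, each lying in a distinct rectangle $R^{m_k}_u$. Since every map involved is holomorphic in the relevant coordinates ($f$ in the normal form of Proposition \ref{normal form}, and the stable holonomies by Proposition \ref{sho}), the map $\phi$ is holomorphic with complex derivative $\phi'$, and the standard change of variables for a pushforward shows that the density of $\phi_\ast\mu$ at $y$ equals $\sum_k \delta_{x_k}(\partial_z)\,|(\psi^i_k)'(y)|^2$.

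The heart of the computation is to transport each $\delta_{x_k}(\partial_z)$ to the point $y$ in two moves. First, Lemma \ref{l2} says that $q$ is invariant under $f$, so $q_{x_k\eta}(\partial_z) = q_{f x_k,\, f\eta}(Df\,\partial_z)$; pushing the index set forward replaces the diameter over $R^{m_k}_s(x_k)$ by the diameter over the sub-fiber $f(R^{m_k}_s(x_k))$, which by the Markov property lies inside $R^i_s(y)$. Second, since $f x_k$ and $y$ lie on a common stable leaf, the shift formula of Lemma \ref{l3}, namely $q_{f x_k,\xi}(v) = q_{y\xi}(Dh^s v) + \text{(const in }\xi)$, lets me move the base point from $f x_k$ to $y$ without changing the diameter, at the cost of replacing $Df\,\partial_z$ by $Dh^s\circ Df\,\partial_z = D\phi(\partial_z) = \phi'\partial_\zeta$. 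Using that $q_{y\xi}$ is quadratic in its vector argument, so $q_{y\xi}(\phi'\partial_\zeta) = (\phi')^2 q_{y\xi}(\partial_\zeta)$ and the diameter scales by $|\phi'|^2$, I obtain
\[ \delta_{x_k}(\partial_z)\,|(\psi^i_k)'(y)|^2 = \operatorname{diam}\{\, q_{y\xi}(\partial_\zeta) : \xi \in f(R^{m_k}_s(x_k))\,\}, \]
so the density of $\phi_\ast\mu$ at $y$ is the sum over $k$ of the diameters of $\xi\mapsto q_{y\xi}(\partial_\zeta)$ over the tiles $f(R^{m_k}_s(x_k))$.

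Now I would invoke the Markov property once more: the tiles $f(R^{m_k}_s(x_k))$ cover the full target stable fiber $R^i_s(y)$ with disjoint interiors. Since $R^i_s(y)$ is connected and each tile is connected, subadditivity of the diameter of a continuous map over a connected cover gives $\sum_k \operatorname{diam}\{q_{y\xi}(\partial_\zeta):\xi\in f(R^{m_k}_s(x_k))\} \ge \operatorname{diam}\{q_{y\xi}(\partial_\zeta):\xi\in R^i_s(y)\} = \delta_y(\partial_\zeta)$. Hence the density of $\phi_\ast\mu$ dominates that of $\mu$ almost everywhere, i.e.\ $\phi_\ast\mu \ge \mu$ as measures. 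Finally, $\mu$ is finite because $\delta$ is continuous on the compact set $\mathcal U$, and $\phi$ is onto $\mathcal U$ up to the measure-zero set of rectangle boundaries, so $\phi_\ast\mu(\mathcal U) = \mu(\phi^{-1}\mathcal U) = \mu(\mathcal U)$; a nonnegative measure $\phi_\ast\mu - \mu$ with zero total mass must vanish, giving $\phi_\ast\mu = \mu$.

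The main obstacle is precisely the gap between subadditivity and additivity of the diameter: the geometric covariance of $q$ only produces the one-sided bound $\phi_\ast\mu \ge \mu$, since there is no reason for the diameter over the whole stable fiber to equal the sum of the diameters over the Markov tiles (it would if $\xi \mapsto q_{y\xi}(\partial_\zeta)$ traced a straight segment, which one should not expect). Closing this gap is what forces the nonlocal input — equality of total masses under the surjective map $\phi$ — into the argument. The remaining work is the bookkeeping in the two covariance steps, in particular the reparametrization of the test vector by $D\phi$ and the identification $R^i_s(f x_k) = R^i_s(y)$ coming from $y = [x_i, f x_k]$.
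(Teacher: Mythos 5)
Your proof is correct and follows essentially the same route as the paper: the same sub-invariance inequality $\phi_\ast\mu \geq \mu$ derived from Lemma \ref{l2}, Lemma \ref{l3}, the Markov tiling of stable fibers, and subadditivity of diameter over a connected union, followed by the identical total-mass argument to upgrade sub-invariance to invariance. The only cosmetic difference is that you transport data forward to the target point $y$ with explicit Jacobian factors $|(\psi^i_k)'(y)|^2$, while the paper transports backward via $f^{-1}$ and absorbs the Jacobian into the tensorial argument of $\delta$.
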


\begin{proof}
  Let $v$ be a vector tangent to $\mathcal{U}$ at a point $x$ belonging to
  $R^i_u$. Consider the $f^{- 1}$-image of $R_s^i (x)$. It is the union of
  $R_s (\psi_1^i (x)), \ldots, R_s (\psi_{N_i}^i (x))$. By Lemma \ref{l2} and
  Lemma \ref{l3}, we have:
  \begin{eqnarray*}
    \delta (v) & = & \text{diam} \{ q_{x y} (v) : y \in R^i_s (x) \}\\
    & = & \text{diam} \left\{ q_{f^{- 1} x z} \left. \left( \left( D
    f|_{E^u_{f^{- 1} x}} \right)^{- 1} (v) \right) : z \in f^{- 1} (R^i_s (x))
    \right\} \right.\\
    & \leqslant & \sum_{k = 1}^{N_i} \text{diam} \left\{ q_{f^{- 1} x z}
    \left( \left( D f|_{E^u_{f^{- 1} x}} \right)^{- 1} (v) \right) : z \in
    R_s^i (\psi_k^i (x)) \right\}\\
    & = & \sum_{k = 1}^{N_i} \text{diam} \left\{ q_{\psi_k^i (x) z} \left( D
    h_{f^{- 1} x \psi_k^i (x)}^s \circ \left( D f|_{E^u_{f^{- 1} x}}
    \right)^{- 1} (v) \right) : z \in R_s^i (\psi_k^i (x)) \right\}\\
    & = & \sum_{k = 1}^{N_i} \text{diam} \{ q_{\psi_k^i (x) z} ((D
    \psi_k^i)^{- 1} (v)) : z \in R_s^i (\psi_k^i (x)) \}\\
    & = & \sum_{k = 1}^{N_i} \delta ((D \psi_k^i)^{- 1} (v))
  \end{eqnarray*}
  Therefore, $\mu$ is a sub-invariant measure, i.e., for every Borel set $\mathcal{B} \subset \mathcal{U}$, one has
  \[ \mu (\mathcal{B}) \leqslant \mu (\phi^{- 1} \mathcal{B}) . \]
  It is now easy to deduce that $\mu$ is invariant. If $\mathcal{B}$ is a
  Borel set in $\mathcal{U}$, one has
  \[ \mu (\mathcal{U}) = \mu (\mathcal{B}) + \mu
     (\mathcal{U}\backslash\mathcal{B}) \leqslant \mu (\phi^{- 1}
     \mathcal{B}) + \mu (\phi^{- 1} (\mathcal{U}\backslash\mathcal{B})) =
     \mu (\phi^{- 1} \mathcal{U}) = \mu (\mathcal{U}) \]
  Hence, all inequalities are equalities and $\mu$ is an invariant measure.
\end{proof}

\begin{lemma}
  \label{nw}
  If $\mu$ is not the zero measure, then $\delta$ is nowhere vanishing.
\end{lemma}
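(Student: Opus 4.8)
The plan is to exploit the invariance of $\mu$ established in Lemma \ref{inv} at the pointwise level, turning the sub-invariance estimate into an exact self-similarity identity for $\delta$, and then to run a transitivity argument on the zero set of $\delta$.

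First I would upgrade the sub-invariance estimate appearing in the proof of Lemma \ref{inv} to a pointwise equality. Since $\delta$ has the tensorial character of an area form, the expression $\mu = \delta(\partial/\partial z)\, dz\, d\bar z$ transforms correctly under the branch maps $\psi_k^i$, and the change of variables $w = \psi_k^i(z)$ in the identity $\mu(\mathcal B) = \mu(\phi^{-1}\mathcal B)$ (which now holds for \emph{every} Borel set $\mathcal B \subseteq \mathcal U$, because $\mu$ is $\phi$-invariant and not merely sub-invariant) yields
\[ \delta_x(v) = \sum_{k=1}^{N_i} \delta_{\psi_k^i(x)}\bigl((D\psi_k^i)^{-1}(v)\bigr) \quad \text{for } \mu\text{-a.e. } x. \]
As both sides are continuous in $x$ on the open set where the inverse branches $\psi_k^i$ are defined, this identity in fact holds at every such point. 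The content is simply that the inequality in Lemma \ref{inv} was forced to become an equality the moment $\mu$ was recognized as invariant.

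Next I would read off the consequences for the zero set $Z = \{x \in \mathcal U : \delta_x = 0\}$, which is closed by continuity of $\delta$. All summands on the right-hand side are non-negative, and each $(D\psi_k^i)^{-1}$ is a linear isomorphism; since $\delta$ at a point vanishes on one nonzero vector if and only if it vanishes on all of them, $\delta_x = 0$ forces $\delta_{\psi_k^i(x)} = 0$ for every branch $k$. As the points $\psi_k^i(x)$ exhaust the $\phi$-preimages of $x$, this says precisely that $Z$ is backward invariant, $\phi^{-1}(Z) \subseteq Z$; equivalently, the open set $P = \{\delta > 0\}$ is forward invariant under $\phi$, and $P \neq \emptyset$ because $\mu \neq 0$. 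Finally I would invoke transitivity: since $f$ is transitive, $\phi$ is a transitive expanding Markov map whose associated subshift of finite type is irreducible, and a short argument prepending an admissible connecting word shows that the backward orbit $\bigcup_{n \ge 0}\phi^{-n}(x_0)$ of any point $x_0$ is dense in $\mathcal U$. Assuming $Z \neq \emptyset$ and choosing $x_0 \in Z$, backward invariance puts the whole (dense) backward orbit of $x_0$ inside the closed set $Z$, whence $Z = \mathcal U$, i.e. $\delta \equiv 0$ and $\mu = 0$, a contradiction. Therefore $Z = \emptyset$ and $\delta$ is nowhere vanishing.

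I expect the main obstacle to be the first step: justifying that the sub-invariance of Lemma \ref{inv} is genuinely a pointwise equality. The change of variables must be carried out carefully using the area-form character of $\delta$, and the points lying on the boundaries of the $R_u^i$, where $\phi$ and the branches $\psi_k^i$ are only partially defined, must be excluded and then recovered by continuity, using that these boundaries carry zero $\mu$-measure. The transitivity input, namely that backward orbits of an irreducible Markov map are dense, is standard but must be stated explicitly, since it, rather than the mere density of $P$, is what rules out a nonempty zero set.
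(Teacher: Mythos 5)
Your proposal is correct and follows essentially the same route as the paper: both upgrade the sub-invariance inequality from Lemma \ref{inv} to a pointwise equality using the invariance of $\mu$, deduce that the zero set of $\delta$ is backward invariant under $\phi$, and conclude via transitivity (density of backward orbits/preimage sets) plus continuity of $\delta$ that a single zero forces $\delta \equiv 0$, hence $\mu = 0$, a contradiction. The only caveat is that your pointwise identity should be stated Lebesgue-a.e.\ rather than $\mu$-a.e.\ before invoking continuity (otherwise the extension to all points could fail where $\mu$ has no mass), but since the two measures being compared have continuous densities with respect to $dz\,d\bar z$, this is exactly what your change-of-variables argument delivers, so the proof stands.
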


\begin{proof}
  We prove by contradiction. If $\delta$ vanishes at $x \in R_u^i$, then
  \[ \text{diam} \{ q_{x y} (v) : y \in R^i_s (x) \} = 0 \]
  for every $v \in E^u_x$. Since all inequalities in Lemma \ref{inv} are
  equalities, by taking an iteration of $f$, we have
  \[ 0 = \text{diam} \{ q_{x y} (v) : y \in R^i_s (x) \} = \sum_{\phi^n (x') =
     x} \text{diam} \{ q_{x' z} \nobracket ((D \phi^n_{x'})^{- 1} (v)) : z \in
     R_s (x') \} \]
  for any $n \in \mathbb{N}, v \in E^u_{x'}$. Since $D \phi^n_{x'} : E^u_{x'}
  \rightarrow E^u_x$ is a linear automorphism of vector spaces, we have
  \[ \text{diam} \{ q_{x' z} \nobracket (v) : z \in R_s (x') \} \nobracket = 0
  \]
  for any $x' \in \phi^{- n} (x)$. Denote $A \assign \{ x' \in \mathcal{U}|
  \phi^n (x') = x \text{ for some } n \in \mathbb{N} \}$, then $\delta$
  vanishes on $A$. Since $f$ is topologically transitive and $\cup_{n=1}^\infty f^{-n}(R_s^i(x)) \subset A$, $A$ is then a dense
  subset of $\mathcal{U}$. Note that $q_{x y}$ and $\text{diam} \{ q_{x y} (v)
  : y \in R^i_s (x) \}$ varies continuously as $x$ changes, $\delta$ vanishes
  on $A$ implies that $\delta$ vanishes on $\mathcal{U}$, which implies that $\mu$ is the zero measure. 
\end{proof}

The following combinatorial facts is used in \cite{ghy95}:

\medskip

\noindent \textbf{Fact 1.} \textit{Let $K$ be a connected compact set in $\mathbb{C}$ which is the union of finitely many nonempty compact sets $K_j$. If the diameter of $K$ is the sum of the diameters of $K_j$ and no $K_j$ is a single point set, then no point of $K$ belongs to three distinct $K_j$.}

\medskip

\noindent \textbf{Fact 2.} \textit{A compact set $C$ in $\mathbb{R}^p
(p \geqslant 2)$ with nonempty interior cannot be covered by finitely many
compact sets $C_j$ with arbitrarily small diameters and such that no point of
$C$ belongs to three distinct $C_j$.}

\medskip

\begin{lemma}\label{lemma mu vanish}
  $\mu$ is the zero measure and $\delta$ vanishes everywhere. 
\end{lemma}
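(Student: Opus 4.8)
The plan is to argue by contradiction: assume $\mu \neq 0$, so that by Lemma \ref{nw} the area form $\delta$ is nowhere vanishing, and then manufacture a configuration that is forbidden by Fact 2. The two combinatorial facts play complementary roles. The equality case of the sub-invariance established in Lemma \ref{inv} --- which holds precisely because $\mu$ is $\phi$-invariant --- will feed Fact 1 to produce, at every scale, a covering of a connected compact set with no triple points; Fact 2 will then collide with the two-dimensionality of the unstable leaf (this is where the hypothesis $\dim E^u = 2$ enters) to yield the contradiction.

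First I would fix a rectangle $R^i$, a point $x \in R^i_u$, and a nonzero $v \in E^u_x$, and set $K := \{ q_{x y}(v) : y \in R^i_s(x) \} \subset \mathbb{C}$. This is a connected compact set of diameter $\delta(v) > 0$. Iterating the computation in the proof of Lemma \ref{inv} and using Lemmas \ref{l2} and \ref{l3}, for each $n$ I would decompose $K = \bigcup_{|w| = n} K_w$ indexed by length-$n$ inverse-branch words $w$, where each $K_w$ is the corresponding Schwarzian piece sitting inside $K$. Since $\mu$ is invariant, all the inequalities in Lemma \ref{inv} are in fact equalities, so $\text{diam}(K) = \sum_{|w| = n} \text{diam}(K_w)$ at every level, and by Lemma \ref{nw} no $K_w$ is a single point. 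Applying Fact 1 inductively across scales then forces that no point of $K$ belongs to three distinct $K_w$. Moreover, because $\phi^{-n}$ contracts $W^u$ uniformly (so the stable sub-rectangles carried by the $K_w$ shrink) and $q_{x y}(v)$ is uniformly continuous in its arguments, $\max_{|w| = n} \text{diam}(K_w) \to 0$ as $n \to \infty$.

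Finally I would invoke Fact 2. The subtle point --- and the step I expect to be the main obstacle --- is that Fact 2 requires a set with nonempty interior, whereas $K$ itself need not be two-dimensional (e.g.\ when $\dim E^s = 1$ it is a curve). The resolution uses that the unstable rectangle $\overline{R^i_u}$ is genuinely two-dimensional, since $\dim E^u = 2$, and hence has nonempty interior in a chart $\cong \mathbb{R}^2$. The idea is to transport the combinatorics downstairs: the words $w$ simultaneously index the pieces $K_w \subset \mathbb{C}$ and the nested level-$n$ unstable sub-rectangles in $R^i_u$, and a developing correspondence of the form $p \mapsto q_{x, y(p)}(v)$ intertwines the two. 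The no-triple-point property of $\{ K_w \}$ then has to be matched against a covering of $\overline{R^i_u}$ by compact pieces of arbitrarily small diameter in the leaf. The delicate part is to arrange this covering so that it is simultaneously fine (small diameters in the leaf), triple-point-free --- inherited from $\{ K_w \}$ via the correspondence, using that coincidence of two pieces downstairs forces coincidence of the corresponding $K_w$ --- and of a set with nonempty interior. Once this is in place, Fact 2 is contradicted, and we conclude that $\mu$ is the zero measure and $\delta$ vanishes everywhere.
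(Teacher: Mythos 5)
Up to its last step your proposal coincides with the paper's proof: the contradiction setup, the level-$n$ decomposition of $K=\{q_{xy}(v):y\in R_s(x)\}$ into the pieces $K_{x'}=\{q_{f^{-n}x\,z}(Df^{-n}_x(v)):z\in R_s(x')\}$ with $\phi^n(x')=x$, the equality $\text{diam}\,K=\sum \text{diam}\,K_{x'}$ forced by the invariance of $\mu$ (Lemma \ref{inv}), the non-degeneracy of each piece (Lemma \ref{nw}), the application of Fact 1, and the shrinking of the pieces as $n\to\infty$ are all exactly the paper's argument, and your justification of the shrinking (small stable sub-rectangles plus uniform continuity of $q$) is fine. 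The divergence is at the application of Fact 2, and there your proposal has a genuine gap. The paper applies Fact 2 directly to the set $K\subset\mathbb{C}$ and stops; you decline to do so and instead propose to transport the no-triple-point property of $\{K_{x'}\}$ to a fine covering of the unstable rectangle $\overline{R^i_u}$. That transport cannot be carried out as described.

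Concretely: the pieces $K_{x'}$ are the images, under the map $y\mapsto q_{xy}(v)$, of the small \emph{stable} sub-rectangles $f^n(R_s(x'))$ that cover $R_s(x)$, so the overlap combinatorics of $\{K_{x'}\}$ is governed entirely by adjacency in the stable direction. The unstable cylinders $\psi_w(\overline{R^i_u})$ attached to the same inverse-branch words are scattered through $\mathcal{U}$ (each lies in whichever rectangle the corresponding backward itinerary of $x$ visits); in general they neither lie in nor cover $\overline{R^i_u}$, so these words do not index ``nested level-$n$ unstable sub-rectangles in $R^i_u$'' at all. Moreover, the correspondence $p\mapsto q_{x,y(p)}(v)$ you invoke does not exist: $q_{x\,\cdot}(v)$ is a function of a stable point $y\in W^s(x)$, and a point $p$ of an unstable rectangle determines no such $y$. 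Finally, the implication you would need is pointwise --- if three distinct pieces of the downstairs cover share a point, then three \emph{distinct} $K_{x'}$ share a point --- whereas your stated mechanism (coincidence of pieces downstairs forces coincidence of the corresponding $K_{x'}$) is a statement about indices, not about points; since any fine covering of the two-dimensional set $\overline{R^i_u}$ necessarily contains triple points while the overlaps of the $K_{x'}$ reflect only stable-side structure, the two covers are simply unrelated. I will add that the worry motivating your detour is a legitimate question to put to the paper itself: Fact 2, as stated, requires the covered set to have nonempty interior, and the paper invokes it for $K$ without verifying this hypothesis (in the holomorphic setting of \cite{ghy95} the map $y\mapsto s_{xy}(v)$ is holomorphic and nonconstant, hence open, which supplies the interior; here the dependence on $y$ is only continuous). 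But flagging that issue is not the same as repairing it, and the substitute argument you propose does not close the proof.
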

\begin{proof}
 We prove by contradiction. For any $x \in \mathcal{U}$, $v \in E^u_x$, it follows from the definitions that
    \begin{eqnarray*}
    \{q_{x y} (v) : y \in R_s (x)\} & = &  \{q_{f^{- n} x z} (D f^{-n}_x(v)) : z \in f^{- n} (R_s (x))\} \\
    & = & \bigcup_{\phi^n
     (x') = x} \{ q_{f^{- n} x z} (D f^{-n}_x(v)) : z \in R_s
     (x')) \} 
  \end{eqnarray*}
Since all inequalities in Lemma \ref{inv} are equalities, we also have
  \[ \text{diam} \{q_{x y} (v) : y \in R_s (x)\} =
     \sum_{\phi^n
     (x') = x}\text{diam}\{ q_{f^{- n} x z} (D f^{-n}_x(v)) : z \in R_s
     (x')). \]
 Now if $\mu$ is not the zero measure, then by Lemma \ref{nw}, $\delta$ is nowhere vanishing and no $\{ q_{f^{- n} x z} (D f^{-n}_x(v)) : z \in R_s
     (x')) \} $ is a single point set. By fact 1, no point of $ \{q_{x y} (v) : y \in R_s (x)\}$ belongs to three distinct compact sets among $\{ q_{f^{- n} x z} (D f^{-n}_x(v)) : z \in R_s
     (x')) \} $. Let $n$ tend to infinity, we get covers of $\{q_{x y} (v) : y \in R_s (x)\}$ by finitely many compact sets $\{ q_{f^{- n} x z} (D f^{-n}_x(v)) : z \in R_s
     (x')) \}$ with arbitrarily small diameters, which contradicts Fact 2. Therefore, $\mu$ is the zero measure and $\delta$ vanishes everywhere. 
\end{proof}

\begin{proposition}
  \label{pj}
  For any $x \in M, y \in W^s (x)$, $A_y \circ \Phi_y \circ h^s_{x y} \circ \Phi_x^{- 1} \circ A_x^{- 1}$ is a Möbius transformation.
\end{proposition}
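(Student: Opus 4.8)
The plan is to deduce the Proposition from the vanishing of $\delta$ established in Lemma \ref{lemma mu vanish}. Writing $F_{xy} := A_y\circ\Phi_y\circ h^s_{xy}\circ\Phi_x^{-1}\circ A_x^{-1}$, the point is that $F_{xy}$ is a Möbius transformation exactly when its Schwarzian $SF_{xy}$ vanishes identically, while $q_{xy}(v)$ only records the value of $SF_{xy}$ at the base point $0$ of the domain. So the whole task is to promote the information ``$\delta\equiv 0$'', which a priori controls $SF_{xy}$ only at $0$, into the vanishing of $SF_{xy}$ at every point of the domain.

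First I would show that $SF_{xy}(0)=0$ for every $x\in M$ and every $y\in W^s(x)$ on which the holonomy is defined. For $x\in\mathcal U$ this is immediate: by Lemma \ref{lemma mu vanish} the map $y\mapsto q_{xy}(v)$ is constant on $R_s^i(x)$, and since $h^s_{xx}=\mathrm{id}$ gives $q_{xx}(v)=S(\mathrm{id})=0$, that constant is $0$. To reach an arbitrary $x\in M$ I would compare $x$ with $x':=[x_i,x]\in\mathcal U$, which lies on $W^s(x)$: Lemma \ref{l3} transfers the vanishing of the diameter from $x'$ to $x$, so $y\mapsto q_{xy}(v)$ is again constant, hence identically $q_{xx}(v)=0$, on the local stable set of $x$. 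Finally, to pass from the local stable set to an arbitrary $y\in W^s(x)$, I would iterate forward using Lemma \ref{l2}: for $n$ large $f^nx$ and $f^ny$ are close, so $f^ny$ lies in the local stable set of $f^nx$ and $q_{f^nx,f^ny}=0$, whence $q_{xy}(v)=q_{f^nx,f^ny}(Df^n v)=0$.

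It remains to upgrade this pointwise vanishing to vanishing of the whole quadratic differential. Here I would use the transition maps along unstable leaves. If $z\in W^u(x)$ and $w:=h^s_{xy}(z)\in W^u(y)$, then $h^s_{zw}$ and $h^s_{xy}$ coincide as maps $W^u(x)\to W^u(y)$, so
\[ F_{zw}= \bigl(A_w\circ\Phi_w\circ\Phi_y^{-1}\circ A_y^{-1}\bigr)\circ F_{xy}\circ\bigl(A_x\circ\Phi_x\circ\Phi_z^{-1}\circ A_z^{-1}\bigr). \]
The two outer factors are unstable-leaf transition maps, which I claim are complex affine, hence Möbius. Applying (\ref{e2}) and evaluating at $0$ then gives $SF_{zw}(0)=SF_{xy}\bigl(A_x\Phi_x(z)\bigr)\cdot c$ with $c\neq0$ (here $A_x\Phi_x(z)$ is the image of $0$ under the right-hand transition), while the previous paragraph yields $SF_{zw}(0)=0$. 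As $z$ ranges over $W^u(x)$ the points $A_x\Phi_x(z)$ fill the domain, so the holomorphic quadratic differential $SF_{xy}$ vanishes everywhere and $F_{xy}$ is a Möbius transformation.

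I expect the main obstacle to be precisely the claim that the unstable-leaf transition maps $A_z\circ\Phi_z\circ\Phi_x^{-1}\circ A_x^{-1}$ are Möbius, the one ingredient not directly supplied by the earlier lemmas. It requires showing that the change-of-linearization maps between two charts on the same unstable leaf are affine (the standard argument contracts the non-linear part by conjugating with $Df^{-n}$ along the leaf and invoking the uniqueness and equivariance in Proposition \ref{normal form}) and conformal for the invariant structure, so that after normalizing by the $A$'s they take the form $z\mapsto az+b$ with $a\in\mathbb C^\ast$. Without this affineness one could not apply (\ref{e2}) to relate $SF_{zw}(0)$ to the value of $SF_{xy}$ at $A_x\Phi_x(z)$, and the pointwise data from $\delta\equiv0$ could not be spread over the whole leaf.
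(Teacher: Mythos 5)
Your proposal is correct, and its skeleton is the paper's own (the vanishing of $\delta$ from Lemma \ref{lemma mu vanish}, the normalization $q_{xx}(v)=S(\mathrm{Id})(v)=0$, and then spreading the vanishing using Lemmas \ref{l2} and \ref{l3}); but it is substantially more explicit than the paper's proof, and your final step supplies an argument the paper leaves implicit. The paper argues in one breath: $q_{xy}=0$ for $x\in R^i_u$, $y\in R^i_s(x)$, then ``rectangles are dense and $q$ is continuous'' gives $q_{xy}=0$ on all of $M$, hence $F_{xy}:=A_y\circ\Phi_y\circ h^s_{xy}\circ\Phi_x^{-1}\circ A_x^{-1}$ is M\"obius. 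Unpacked, that sentence hides exactly your three extensions: from $x\in\mathcal U$ to arbitrary $x$ (your use of Lemma \ref{l3}), from $y$ in a rectangle slice to arbitrary $y\in W^s(x)$ (your forward iteration via Lemma \ref{l2}), and --- the real content --- from $SF_{xy}(0)=0$ for every pair to $SF_{xy}\equiv 0$ on the whole domain. You are right that this last promotion does not follow from Lemmas \ref{l2}--\ref{lemma mu vanish} alone, since all of them evaluate the Schwarzian only at the base point; one genuinely needs that the unstable transition maps $\Phi_z\circ\Phi_x^{-1}$, $z\in W^u(x)$, are affine with conformal linear part, so that conjugating by them moves the base point of $F_{zw}$ to an arbitrary point of the domain of $F_{xy}$ without disturbing the Schwarzian. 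That fact is not in the paper's quoted version of Proposition \ref{normal form}, but it is part of the full statement of Proposition 3.3 in \cite{sad05} (and follows from the contraction-plus-uniqueness argument you sketch, with orientation handled by continuity along the connected leaf), so invoking it is legitimate rather than a gap. Two minor points: in your step for arbitrary $y$, ``$f^ny$ lies in the local stable set of $f^nx$'' should really be ``$f^ny\in R_s(f^nx)$ for the rectangle containing $f^nx$,'' which requires a word when $f^nx$ is near a rectangle boundary (a routine chaining/continuity fix that the paper's density appeal glosses over in the same way); and your identity $F_{zw}=(A_w\Phi_w\Phi_y^{-1}A_y^{-1})\circ F_{xy}\circ(A_x\Phi_x\Phi_z^{-1}A_z^{-1})$ holds at the level of germs near $z$, using that $h^s_{zw}$ and $h^s_{xy}$ are the same holonomy between the same pair of unstable leaves, which is all you need. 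In short: your proof is right, and it is a more honest account of why the proposition follows from the lemmas than the paper's own two-line conclusion.
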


\begin{proof}
  By Lemma \ref{lemma mu vanish}, for any $x \in R^i_u$, $v \in E^u_x$, we have
  \[ \text{diam} \{ q_{x y} (v) : y \in R^i_s (x) \} = 0. \]
  Since $q_{x x} (v) = S (\text{Id}) (v) = 0$, $q_{x y} = 0$ for every $y \in R^i_s (x)$. Therefore, $q_{x y} = 0$ in every rectangle $R^i_u$. Since all rectangles form an open dense subset of $M$ and $q_{x y}$ varies continuously as $x, y$ changes, we see that $q_{x y} = 0$ holds on the ambient manifold $M$, and
  \[ A_y \circ \Phi_y \circ h^s_{x y} \circ \Phi_x^{- 1} \circ A_x^{- 1} \in \text{PSL}(2, \mathbb{C}) \]
  is a Möbius transformation.
\end{proof}
Theorem \ref{t1} follows immediately from the following proposition, which is essentially proved in \cite{ks03}.
\begin{proposition}[Proof of Proposition 3.3 in \cite{ks03}]\label{ksb}
   Let $f$ be a transitive $C^{\infty}$ uniformly $u$-quasiconformal Anosov diffeomorphism of a compact manifold $M$. If $A_y \circ \Phi_y \circ h^s_{x y} \circ \Phi_x^{-1} \circ A_x^{-1}$ is a Möbius transformation for any $x \in M, y \in W^s(x)$, then the stable holonomy is globally defined. 
\end{proposition}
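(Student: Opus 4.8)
The plan is to exploit the rigidity of Möbius transformations---that they extend to global self-maps of the Riemann sphere---to extend the locally defined holonomy to the entire leaf. Fix $x\in M$ and $y\in W^s(x)$. By Proposition \ref{normal form} the map $\iota_x:=A_x\circ\Phi_x$ is a diffeomorphism from $W^u(x)$ onto $E^u_x\cong\mathbb{C}$, so I regard $W^u(x)$ as the finite part of $\hat{\mathbb{C}}=\mathbb{C}\cup\{\infty_x\}$, the added point $\infty_x$ being the single end of the leaf. By hypothesis $g_{xy}:=A_y\circ\Phi_y\circ h^s_{xy}\circ\Phi_x^{-1}\circ A_x^{-1}$ coincides, on the image under $\iota_x$ of the domain of the genuine holonomy, with a unique $\hat g_{xy}\in\mathrm{PSL}(2,\mathbb{C})$. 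The natural candidate for a globally defined holonomy is $H_{xy}:=\iota_y^{-1}\circ\hat g_{xy}\circ\iota_x$, which is defined on all of $W^u(x)$ except possibly at the single pole-preimage $P:=\iota_x^{-1}(\hat g_{xy}^{-1}(\infty))$.

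First I would show that the maximal domain $\Omega\subseteq W^u(x)$ of the genuine stable holonomy is open (immediate from the local product structure) and relatively closed in $W^u(x)\setminus\{P\}$. For the latter, if $p_k\to p$ with $p_k\in\Omega$ and $p\neq P$, then $h^s_{xy}(p_k)=\iota_y^{-1}\hat g_{xy}\iota_x(p_k)$ converges to the finite point $\iota_y^{-1}\hat g_{xy}\iota_x(p)\in W^u(y)$; since stable leaves vary continuously and are closed, this limit lies in $W^s(p)\cap W^u(y)$, whence $p\in\Omega$. As $W^u(x)\setminus\{P\}\cong\mathbb{R}^2\setminus\{\mathrm{pt}\}$ is connected and $\Omega\ni x$, this forces $\Omega\supseteq W^u(x)\setminus\{P\}$; conversely, if $P$ is a finite pole then $P\notin\Omega$, since the genuine holonomy takes values in $W^u(y)$ and never at the end $\infty_y$. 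Thus the holonomy is globally defined \emph{precisely} when $\hat g_{xy}$ has no finite pole, i.e. when $\hat g_{xy}(\infty_x)=\infty_y$; as $\hat g_{xy}$ also fixes $0$ (the holonomy carries base point to base point), this means $\hat g_{xy}$ is the linear map $z\mapsto az$. This reduces the proposition to the completeness statement that the stable holonomy carries the end of $W^u(x)$ to the end of $W^u(y)$.

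The main obstacle is exactly this last step---excluding a finite pole. Here I would use the $f$-equivariance of the construction. Since $\Phi$ conjugates $f$ to $Df|_{E^u}$ and each $A_{fx}\circ Df|_{E^u_x}\circ A_x^{-1}$ is a conformal linear map $z\mapsto\alpha_x z$, the cocycle computation of Lemma \ref{l2} yields $\hat g_{xy}=(M_y^{(n)})^{-1}\circ\hat g_{f^nx\,f^ny}\circ M_x^{(n)}$, where $M_x^{(n)}\colon z\mapsto a_n z$ with $|a_n|=\prod_{j=0}^{n-1}|\alpha_{f^jx}|\to\infty$; consequently the pole-preimage is equivariant, $f^n(P)=P(f^nx,f^ny)$, and its $\iota$-coordinate scales by $a_n$. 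As $n\to+\infty$ the pair $(f^nx,f^ny)$ contracts to the diagonal and $\hat g_{f^nx\,f^ny}\to\mathrm{Id}$, whose pole sits at the end. The delicate point---and the reason an elementary rate estimate does not suffice---is that a finite pole for $(x,y)$ is a priori consistent with this renormalization, because the genuine holonomy domain of $(f^nx,f^ny)$ expands at the same exponential rate at which the pole recedes. To close the argument I would combine the equivariance with transitivity, using that for a transitive Anosov diffeomorphism every unstable leaf is dense in the compact manifold $M$, so that the recurrence of the orbit $\{f^n(P)\}$ returns the alleged incompleteness point into a region where the uniform local product structure forces the stable leaf through $P$ to meet $W^u(y)$---contradicting $P\notin\Omega$. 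This completeness-via-recurrence argument, the heart of the Kalinin--Sadovskaya proof, is where the transitivity hypothesis is indispensable and is the part I expect to demand the most care.
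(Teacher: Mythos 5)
Your setup is reasonable and you correctly isolate where the content of the proposition lies: identifying $W^u(x)$ with $\mathbb{C}$ via $\iota_x=A_x\circ\Phi_x$, extending the coordinate holonomy to a unique $\hat g_{xy}\in\mathrm{PSL}(2,\mathbb{C})$, and reducing the claim to the absence of a finite pole (equivalently, to $\hat g_{xy}$ being linear, since it fixes $0$). But the two steps that carry all the weight are both defective. First, your relative-closedness argument rests on the assertion that ``stable leaves vary continuously and are closed''. Stable leaves of a transitive Anosov diffeomorphism are dense, injectively immersed planes; they are never closed subsets of $M$. From $p_k\to p$, $h^s_{xy}(p_k)\to q$ and $h^s_{xy}(p_k)\in W^s(p_k)$ you may conclude $q\in W^s(p)$ only if the intrinsic stable distances $d^s\bigl(p_k,h^s_{xy}(p_k)\bigr)$ stay bounded; no such bound is established, and obtaining one is essentially equivalent to the completeness statement you are trying to prove. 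So even the identity $\Omega=W^u(x)\setminus\{P\}$ is unproven.

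Second, and fatally, the exclusion of a finite pole --- which you yourself call the heart of the proof --- is never carried out. You rightly observe that the renormalization $\hat g_{xy}=(M_y^{(n)})^{-1}\circ\hat g_{f^nx\,f^ny}\circ M_x^{(n)}$ is inconclusive, since the pole of $\hat g_{f^nx\,f^ny}$ recedes at exactly the rate $|a_n|$ at which the conjugating linear maps expand (turning this into a proof would need a Brin-type pinching condition, unavailable here). But the recurrence argument you then sketch does not yield a contradiction: for a transitive Anosov diffeomorphism every unstable leaf is dense, so the local product structure already gives, for free, that $W^s(P)$ (indeed the stable leaf of \emph{every} point of $M$) intersects $W^u(y)$. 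This is perfectly compatible with $P\notin\Omega$, because $P\notin\Omega$ does not assert $W^s(P)\cap W^u(y)=\emptyset$; it asserts that the holonomy continued from $x$ cannot be extended to $P$ --- as $z\to P$ the images $h^s_{xy}(z)$ escape to the end of $W^u(y)$ with blowing-up stable distances, and no intersection point of $W^s(P)$ with $W^u(y)$ arises as their limit. Hence ``recurrence returns $P$ to a region where $W^s(P)$ meets $W^u(y)$'' contradicts nothing. Note finally that the paper itself does not prove this proposition: it invokes the proof of Proposition 3.3 of Kalinin--Sadovskaya \cite{ks03} (which follows an argument of Ghys \cite{ghy95}), and your attempt does not reconstruct that argument; the key completeness step is missing.
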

Combining Proposition \ref{pj} and Proposition \ref{ksb}, we see that the stable holonomy is globally defined when $\dim E^u = 2$. This finishes the proof of Theorem \ref{t1}.
\bibliographystyle{plain}
\bibliography{ref}

\end{document}